\newcommand{\cP}{\mathcal{P}}
\newcommand{\absorbing}{\mathbf{e}}
\newcommand{\rate}[1]{e^{-c_1 #1}}
\newcommand{\fastDomain}{\chi}
\newcommand{\numAbs}{L}
\newcommand{\seqSpace}{\mathcal{L}}
\newcommand{\E}{\mathbb{E}}
\newcommand{\R}{\mathbb{R}}
\newcommand{\fastProcessSeqCont}{V^{\lambda, \mathbf{x}, \mathbf{v}, l}}
\newcommand{\fastProcessSeqLaw}{\mu^{\mathbf{x}, \mathbf{v}, l}}
\newcommand{\fastProcessSeqContLaw}{\mu^{\lambda, \mathbf{x}, \mathbf{v}, l}}
\newcommand{\LawLimitPosDep}{\tilde{\mu}_{\infty}^{ \mathbf{x},\mathbf{v}}}
\newcommand{\fastProcessSeq}{V^{\mathbf{x}, \mathbf{v}, l}}
\newcommand{\fast}{V^{\lambda, \mathbf{x}, \mathbf{v}}}
\newcommand{\fastFrozen}{\tilde{V}^{ \lambda, \mathbf{x}, \mathbf{v}}}
\newcommand{\fastFrozenDisc}{\tilde{V}^{\mathbf{x}, \mathbf{v}}}
\newcommand{\fastFrozenLaw}{\tilde{\mu}^{\lambda, \mathbf{x}, \mathbf{v}}}
\newcommand{\fastFrozenDiscLaw}{\tilde{\mu}^{ \mathbf{x}, \mathbf{v}}}
\newcommand{\fastProcessMat}{P}
\newcommand{\fastProcessMatAb}{P^{Ab}}
\newcommand{\fastProcessMatNAb}{P^{NA}}
\newcommand{\slowProcess}[2]{X^{#1}_{#2}}
\newcommand{\avgSol}{\mathcal{T}}
\newcommand{\bv}{\mathbf{v}}
\newcommand{\bx}{\mathbf{x}}
\newcommand{\bz}{\mathbf{z}}
\newcommand{\slow}{X_t^{\lambda, \bx, \bv}}
\newcommand{\fastuncoupled}{V_t^{\lambda,\bv}}
\newcommand{\fastcoupled}{V_t^{\lambda, \bx, \bv}}
\newcommand{\statespace}{\chi}
\newcommand{\slowbar}{\bar{X}_t^{\bx}}
\newcommand{\slowbarv}{\bar{X}_t^{\bx, \bv}}
\newcommand{\frozenintro}{\tilde{V}_t^{\lambda, \bx, \bv}}
\newcommand{\absorbingErg}[2]{\absorbing^{#1}_{#2}}
\newcommand{\transProbsT}[1]{P_{#1}}
\newcommand{\transProbs}[1]{\cP_{#1}}
\newtheorem{theorem}{Theorem}[section]
\newtheorem{lemma}{Lemma}[section]
\newtheorem{assumption}{Assumption}
\newtheorem{prop}{Proposition}[section]
\theoremstyle{remark}
\newenvironment{Note}
{\pushQED{\qed}\remarkx}
{\hfill\popQED\endnotex}
\begin{document}
\author{ B. D. Goddard$^{(1)}$, M. Ottobre$^{(2)}$, K. Painter$^{(3)}$  and I. Souttar$^{(4)}$}
	\address{(1) School of Mathematics and Maxwell Institute for Mathematical Sciences, University of
Edinburgh, Edinburgh EH9 3FD, UK. b.goddard at ed.ac.uk}
	\address{(2) Maxwell Institute for Mathematical Sciences and Mathematics Department, Heriot-Watt University, Edinburgh EH14 4AS, UK. m.ottobre at hw.ac.uk}

\address{(3) Inter-university Department of Regional and Urban Studies and Planning, Politecnico di Torino, Torino 10125, Italy, kevin.painter at polito.it}

\address{(4) Maxwell Institute for Mathematical Sciences and Mathematics Department, Heriot-Watt University, Edinburgh EH14 4AS, UK. is79 at hw.ac.uk}

\title{On the study of slow-fast dynamics,  when the fast process has multiple invariant measures}
\date{}
\maketitle

\begin{abstract}
Motivated by applications to mathematical biology, we study  the averaging problem for slow-fast systems, {\em in the  case in which the fast dynamics is a stochastic process with multiple invariant measures}. We consider both the case in which the fast process is decoupled from the slow process and  the case in which the two components are fully coupled. We work in  the setting in which the slow process evolves according to an Ordinary Differential Equation (ODE) and the fast process is a continuous time Markov Process with finite state space and show that, in this setting, the limiting (averaged) dynamics can be described  as a random ODE (that is, an ODE with random coefficients.)

{\bf Keywords.} Multiscale methods, Processes with multiple equilibria, Averaging, Collective Navigation, Interacting Piecewise Deterministic Markov Processes. 
   \smallskip
   
{\bf AMS Subject Classification 2020.}  34C29, 34D05, 34E10, 34F05, 37H10, 60K35.
\end{abstract}

\section{Introduction}
This paper is concerned with the study of the averaging problem for coupled slow-fast dynamics, in the case in which the slow dynamics is an Ordinary Differential Equation (ODE) and, more importantly,  the fast dynamics is a continuous time Markov process (CTMP) {\em with multiple invariant measures}.  This study  is inspired by problems in mathematical biology and in the modelling of collective behaviour (specifically it emerged from  \cite{painter} and related literature); in this work we focus on extracting the mathematical structure underlying such problems and on tackling it. In Section \ref{subsec:Heuristics} we will discuss more explicitly how the theory we develop here is relevant to such models. 

To explain the difficulty in considering the averaging problem when the fast process has multiple invariant measures, let us start by briefly recalling the classical averaging paradigm in the case in which the fast process has a unique invariant measure. The simplest case is the one in which the fast process evolves independently of the slow process, i.e. it is decoupled from the slow process.
 To fix ideas consider a slow-fast system $(\slow, \fastuncoupled)$, evolving as follows: the slow component evolves in $\R^N$ according to an ODE, 
\begin{align} \label{slowergodic}
   d\slow & = a(\slow, \fastuncoupled) dt\, ,  \quad X_0^{\lambda, \bx,\bv} = \bx \, ,
\end{align}
$a:\R^N \times \statespace \rightarrow \R^N$,  while the fast component $\fastuncoupled$ is a CTMP on a finite state space $\statespace$,  with $V_0^{\lambda,\bv} = \bv$. That is,  the process $\fastuncoupled$   changes state according to a given transition matrix $\cP$ every time a homogenous Poisson clock $\tau_t^{\lambda}$ with rate $\lambda$ goes off. Here $\cP = (\cP(\bv, \bv '))_{\bv, \bv' \in \statespace}$, where $\cP(\bv, \bv ')$ denotes the probability of going from $\bv$ to $\bv '$ and in short we say that $\fastuncoupled$ is a $(\tau_t^{\lambda}, \cP)$ -- CTMP.\footnote{We remark that here and throughout we will assume that the CTMP we deal with are càdlàg (right-continuous with left limits). Hence, if the Poisson clock goes off at time $s>0$, the value of $V_s^{\lambda, \bv}$ is the new value obtained by using the transition matrix $\cP$. The paths of the slow variable will instead be always continuous, thanks to our assumptions on $a$.} 

As $\lambda \rightarrow \infty$ the dynamics of $\fastuncoupled$ becomes faster and faster. 
 Assume now that the process $\fastuncoupled$  admits a unique invariant measure $\mu$ (i.e. it is {\em ergodic}), and 
 $$
 \fastuncoupled \rightharpoonup \mu, \quad \mbox{as } t \rightarrow \infty, \mbox{ for every fixed }
 \lambda>0 \,,   
 $$
where the above  convergence is meant in law.\footnote{Note that fixing $\lambda>0$ and letting $t \rightarrow \infty$ is here the same as fixing $t>0$ and letting $\lambda \rightarrow \infty$. } Then, as $\lambda \rightarrow \infty$,     the dynamics of the slow component $\slow$ converges weakly  to the so-called {\em averaged dynamics}, namely to the solution of the following ODE\footnote{With this setup the limit is deterministic so convergence is also in probability, but we stick to weak convergence because it is the mode of convergence that we will consider in this paper. }
	\begin{equation}\label{eqn:averaged}
		d\slowbar = \bar a(\slowbar) dt \,   \quad \bar X_0^{\bx} = \bx,    \end{equation}
where the {\em effective coefficient } $\bar a: \R^N \rightarrow \R^N$ is given by
$$
\bar a(\mathbf{z}):= \sum_{\mathbf y \in \statespace} a(\mathbf{z}, \mathbf y) \mu (\mathbf y) \,.
$$
We emphasize that, by ergodicity of the fast process,  the invariant measure $\mu$  does not depend on the initial state $\bv$ of the process $\fastuncoupled$, hence  the limiting averaged dynamics does not depend on $\bv$ either. 

In the above we have taken the slow process to evolve according to an ODE and the fast process to be a CTMP for two reasons:  because this is the setting we will consider in this paper and for ease of exposition. As is well known, however, the conclusion is true in much greater generality (see e.g. \cite{pavliotis2008multiscale} and more on the literature in Section \ref{subsec:relation to literature}). In particular, a similar result still holds when the fast process is coupled to the slow process.  In the setup we have just introduced this is  the case if, for example, the entries of the transition matrix  of the fast process depend not only on the current position of the fast process but also on the  position of  the  slow process. In this case we denote the fast process by $\fastcoupled$ rather than $\fastuncoupled$ (as its evolution  will depend  on the initial datum of the slow process as well) and  the evolution of $\fastcoupled$ is described by assigning a Poisson clock with rate $\lambda$ and  a family of transition matrices indexed by $\bx \in \R^N$, $\{\cP_{\bx}\}_{\bx \in \R^N} = \{\cP_{\bx}(\bv, \bv')_{\bv, \bv ' \in \statespace}\}_{\bx \in \R^N}$: if  the Poisson clock $\tau^{\lambda}_t$ goes off at time $s>0$, then the fast process uses the transition matrix $\cP_{X_s^{\lambda,\bx,\bv}}$ to determine the probability of transition to the next state. In short, we write that $\fastcoupled$ is a $(\tau_t^{\lambda}, \cP_{\slow})$-CTMP.  

When the slow and the fast processes are fully coupled, roughly speaking the  idea is that the slow process won't change much in the time it takes the fast process to reach equilibrium. So,   in order to understand the limiting dynamics,  one considers an intermediate process,  the so called `frozen process',  which we denote by $\fastFrozen_t$. This process is  obtained by `freezing'  the value of the slow component: in the setting described so far this is the CTMP $\fastFrozen_t$ on $\statespace$ obtained by fixing $\slow$ say to its initial value  $\slow=  \bx \in \R^N$,  and by then using the transition matrix $\cP_{\bx}$ to determine the transitions of the fast process when $\tau^{\lambda}$ goes off (that is, we  always use the same transition matrix $\cP_{\bx}$, irrespective of the time when the Poisson clock goes off). In other words, $\fastFrozen_t$ is a $(\tau_t^{\lambda}, \cP_{\bx})$--CTMP.  As customary, one can think of $\frozenintro$ as a family of processes depending on the parameter $\bx$.  Assuming that, for each $\bx$ fixed, the process  $\fastFrozen$  is ergodic  with (unique) invariant measure $\mu(\bx; \cdot)$,\footnote{We clarify that the notation $\mu(\bx; \cdot)$ means that for each $\bx$ fixed $\mu(\bx; \cdot)$ is a probability measure (in the second variable)}  as $\lambda \rightarrow \infty$ the slow process $\slow$ converges weakly  to the solution of the following ODE
\begin{equation} \label{intro:averagedcoupled}
d\slowbar = \bar a(\slowbar) dt, \qquad  \bar X_0^{ \bx, \bv} = \bx \, ,
\end{equation}
where this time the averaged coefficient $\bar a: \R^N \rightarrow \R^N$ is given by 
$$
\bar a(\bz) = \sum_{ \mathbf v \in \statespace} a(\bz,\mathbf v) d\mu( \bz; \mathbf v) \, \,.
$$
Note that the limiting invariant measure of the fast (frozen) process is unique for each $\bx$ fixed and, again by ergodicity of the frozen process, it does not depend on the initial state $\bv$. 

Whether the fast and slow component are fully coupled or not, we have used the {\em unique} invariant measure of the fast process (or, more precisely, of the frozen process) to determine the limiting drift $\bar a$, and hence the limiting dynamics. If the fast (frozen) process has more than one invariant measure (in the coupled case, more than one for each $\bx$ fixed),\footnote{Strictly speaking the number of invariant measures could depend on the $\bx$ we fix, we come back to this later.} then what is the evolution we obtain as $\lambda \rightarrow \infty$?  The precise answer to this question is stated in Theorem \ref{thm:averaging1Erg}, which is the main result of this paper. An informal statement of this theorem is as follows.  

\subsection{Informal statement of Main Result}\label{subsec:informalstatement} Let $(\slow, \fastcoupled) \in \R^N \times \statespace$ be a slow-fast process where the slow component $\slow$ evolves according to the following evolution equation, 
\begin{equation}\label{eqn:fullycoupled}
    dX^{\lambda,\mathbf{x}, \mathbf{v}}_t = a(X^{\lambda,\bx, \bv}_t, \fastcoupled) dt, \quad X^{\lambda,\bx, \bv}_0 = \bx,\, 
    V^{\lambda, \bx, \bv}_0= \mathbf{v} , 
\end{equation}
and $\fastcoupled$ is a $(\tau_t^{\lambda}, \cP_{\slow})$-CTMP.
Assume that the associated (family of)  frozen processes $\frozenintro$ -- or, equivalently, the (family of)  transition matrices $\cP_{\bx}$ -- are such that the following holds: under the dynamics of $\frozenintro$ the state space is partitioned into a transient set,  $\mathcal T$,  and $L$ ergodic classes,\footnote{By  ergodic class we mean a subset of state space which is  irreducible and positive recurrent. Hence, on each ergodic class $E^{(i)}$, the process admits a unique invariant measure, $\mu^{(i)}$, supported on that class. } $\statespace = \{E^{(1)} \cup  \dots \cup E^{(L)} \cup \mathcal T \}$, where the unions are disjoint and this decomposition is independent of $\bx$ (see Note \ref{note:comments on main result} and Section \ref{subsec:relation to literature} for comments on this assumption). Because of this state space decomposition, given any initial state $\bv$, there is probability one that $\frozenintro$ is absorbed in one of the ergodic classes. Let $q^{(i)}(\bx, \bv)$  be the probability that the frozen process  $\frozenintro$, starting at time zero from $\bv$, is absorbed in $E^{(i)}$ (so that $q^{(i)}(\bx, \bv)=1$ if $\bv \in E^{(i)}$).  We will also make assumptions which guarantee that the motion of the slow variable is such that, once the fast process $\fastcoupled$ enters an ergodic class, it will not leave it  (see Assumption \ref{ass:pErg} \eqref{ass:p:AbsErg} in  Section \ref{sec:ergClasses}). In this setting the limiting process is given by the following: let  $\zeta^{\mathbf{x},\mathbf{v}}$ be a discrete random variable which can take $L$ values and in particular takes the value $i \in \{1, \dots, L\}$ with probability $q^{(i)}(\bx, \bv)$. Draw a realization of $\zeta^{\mathbf{x},\mathbf{v}}$; if $\zeta^{\mathbf{x},\mathbf{v}}=i$ then $\slowbarv$ will evolve according to
\begin{equation}\label{eqn: informalstatementlimit}
d\slowbarv = \bar{a}^{(i)}(\slowbarv) dt, \quad \bar X_0^{\bx, \bv} = \bx \, 
\end{equation}
where 
\begin{equation}\label{eqn:avgedDriftErgClass}
    \bar{a}^{(i)}(\bz) = \sum_{ \bv \in E^{(i)}} a(\bz,\bv) \mu^{(i)}(\bz;\bv)  \,,
\end{equation}
and $\mu^{(i)}(\bz; \cdot)$ is the ergodic measure of the frozen process $\tilde{V}_{t}^{\lambda, \bz, \bv}$ supported on the class $E^{(i)}$.  The limiting drift  could also be seen as a function $\bar a= \bar a(\bz, \zeta^{\mathbf{x},\mathbf{v}})$, $\bar a: \R^N \times \{1, ..., L\} \rightarrow \R^N$. In other words, if $\zeta^{\mathbf{x},\mathbf{v}}$ is as in the above, then the limiting dynamics can be rewritten as
$$
d\slowbarv = \bar{a}(\slowbarv, \zeta^{\mathbf{x},\mathbf{v}}) dt, \quad \bar X_0^{\bx, \bv} = \bx \, ,
$$
with $\bar a(\bz, \zeta^{\mathbf{x},\mathbf{v}}) = \bar a^{(i)}(\bz)$ if $\zeta^{\mathbf{x},\mathbf{v}} =i$. For this reason we say that the limiting process is a random ODE.  The precise statement of this result is contained in Theorem \ref{thm:averaging1Erg}, which is the main (more general) result of this paper.

In this non-ergodic setting the method of multiscale expansions \cite{pavliotis2008multiscale} does not  seem to be directly applicable to obtain, even just formally, an expression for the limiting dynamics. Hence, besides technical aspects, it is important to develop an intuition on how to obtain the limiting equation. For this reason the rest of this paper is structured as follows. 

In Section \ref{subsec:Heuristics} we give a heuristic argument to illustrate the intuition behind our main result and to show how such a problem naturally emerges from applications to collective dynamics.  In particular, in Note \ref{note:comments on main result} we will make several remarks on our main result, provide further motivation, and we will comment  on the difference between the case in which the fast (frozen) process has multiple invariant measures and the case in which it is metastable; this is an important point from a modelling and applications perspective. In Section \ref{subsec:relation to literature} we explain relation to existing literature. Section \ref{sec:non-coupled} and Section \ref{sec:fully-coupled regime} are devoted to proofs (and the organisation of the proofs is better explained in the next section). To highlight main ideas, we have strived for `hands on' proofs, which don't make use of any abstract results.

\section{Heuristics and Motivation}\label{subsec:Heuristics}
We start with a toy example,  which is a simplified version of the collective navigation model which motivated us to look into this problem. To explain the way in which the limit is obtained in the simplest possible scenario, in this example the slow process has a specific drift, the fast process is decoupled from the slow process, and it has two absorbing states (note that absorbing states are just ergodic classes of cardinality one).  We then give a heuristic explanation of how our main result is obtained in two steps:  in step one, we consider the case in which the slow component is a general ODE,  the fast component is decoupled from the slow component and it has $L$ absorbing classes. Then, in the second step, we first generalise  to a fully coupled slow-fast system where the fast (frozen) process has $L$ absorbing states and then to a fast (frozen) process with $L$ ergodic classes. 

 These two steps also broadly mirror the approach of proof we take in Section \ref{sec:non-coupled} and Section \ref{sec:fully-coupled regime}, we will be more precise below. 
 
 {\em  {Toy Example}.}  
 Let $(\slow, \fastuncoupled)$ be a process in $\R^N \times \{-1,1\}^N$, i.e. here $\statespace = \{-1, 1\}^N$, \footnote{As customary, $\{-1,1\}^N$ is the set of vectors of length $N$, whose components are either 1 or -1. } where $\slow$ can be thought of as the vector of positions  at time $t$ of $N$ one-dimensional particles, each of which evolves with velocity given by the $i$-th component of 
$\fastuncoupled$;  that is 
\begin{align}\label{eqn:toyslow}
d\slow = \fastuncoupled \,dt \, , \quad X_0^{\lambda, \bx,\bv}=\bx, V_0^{\lambda, \bv}=\bv\,.
\end{align}
The dynamics of the velocity vector $\fastuncoupled$ evolves  independently of the slow component, according to the following rule: each particle carries a Poisson clock with rate $\lambda$ (the clocks are independent but the rate is the same for all particles); when the $i$-th clock goes off the velocity of particle $i$ switches (if it was $+1$ it becomes $-1$ or viceversa) with a certain probability.  When all the particles reach consensus, i.e. if the process $\fastuncoupled$ hits either one of the states $\pm \absorbing$ where $\absorbing$ is the vector of length $N$ with all entries equal to one, $\mathbf{e} \coloneqq (+1, \ldots, +1)$, then the velocity switches stop and all particles will indefinitely keep travelling with the consensus velocity, $\pm 1$.  That is, if for any $\mathbf{v} = (v_1, \ldots, v_N) \in \{-1, 1\}^N$ we use the notation $\mathbf{v}_{(i)}\coloneqq (v_1, \ldots, -v_i, \ldots ,v_N)$ to denote the vector $\bv$ with the $i^{th}$ component switched,   then the process $\fastuncoupled$ is a CTMP with overall Poisson clock of intensity $N\lambda$ and transition matrix $\cP$ with the following structure: for any $\bv \neq \pm \absorbing$, we have 
\begin{equation}\label{ass:pFunc}
   \transProbs{}(\mathbf{v}, \mathbf{v}') = \begin{cases}
   >0
   & \text{if $\mathbf{v}' = \mathbf{v}_{(i)}$ for some $1\leq i\leq N$,} \\
    0 & \text{otherwise,}
    \end{cases}
\end{equation}
and moreover 
\begin{equation}\label{eqn:toyexample absorbing}
\transProbs{}(\absorbing, \absorbing) = \transProbs{}(-\absorbing, -\absorbing) = 1 \, ,
\end{equation}
so that $\absorbing$ and $-\absorbing$ are (the only) absorbing states.

By standard Markov Chain theory (see e.g. \cite[Theorem 1.3.2]{norris1998markov}), the structure of the transition matrix $\cP$ implies the following two facts: given any initial velocity configuration $\bv$, with probability one the fast process will be  eventually absorbed in either  $\absorbing$ or $-\absorbing$ (the particles will `reach consensus' and eventually will start moving either all with velocity 1 or all with velocity $-1$); for any initial velocity  $\bv$, there is a positive (and computable) probability,  $0\leq q(\bv)\leq 1$, that the process $\fastuncoupled$ is absorbed in $-\absorbing$, so that $0\leq 1-q(\bv) \leq 1$ is the probability that the process is absorbed in $ \absorbing$. We emphasize that this probability depends on the initial state $\bv$ (but not on $\lambda$).  In particular  in this case the state space $\statespace$ is decomposed into two ergodic classes, 
$E^{(1)}= \{\absorbing\}, E^{(2)} =\{ - \absorbing\} $, and all other states are transient. 

In this setting it is relatively easy to guess what the limiting dynamics will look like: as $\lambda$ gets larger, the velocity switches will happen faster and faster so, in the limit $\lambda\rightarrow\infty$, the fast process will reach consensus  `immediately', i.e. it will  be `immediately' absorbed either in $-\absorbing$ or in $\absorbing$, with probability $q(\bv)$ and $1-q(\bv)$, respectively. Therefore, one expects that the limiting dynamics will be described by the following process:
\begin{equation}\label{eqn:avgedProcess}
    d\bar{X}^{\mathbf{x}, \mathbf{v}}_t = \zeta^{\mathbf{v}}  dt, \quad \bar{X}^{\mathbf{x},\mathbf{v}}_0 = \mathbf{x},
\end{equation}
where $\zeta^{\mathbf{v}}$ is a realization of a Bernoulli random variable with 
\begin{equation*}
    \zeta^{\mathbf{v}} = \begin{cases}
    -\absorbing  & \text{with probability } q(\mathbf{v}) \\
    \absorbing & \text{with probability } 1-q(\mathbf{v}).
    \end{cases}
\end{equation*}
In words, the limiting dynamics is as follows:  we flip a coin which has probability $q(\bv)$ of landing on $-\absorbing$ and probability $1-q(\bv)$ of landing on $\absorbing$. If the coin lands on $\absorbing$ then the position vector will evolve according to the ODE with drift $\absorbing$ (i.e. all particles will move with velocity one in a straight line), otherwise according to the ODE with drift $-\absorbing$.  
This result is made precise in Proposition \ref{thm:averaging}, and proved in Section \ref{sec:non-coupled}. Strictly speaking Proposition \ref{thm:averaging} is a corollary of Theorem \ref{thm:averaging1Erg} but we prove it separately, as the simpler setting allows to highlight the important ingredients of the proof. 

\begin{Note}
\textup{ The process $\fastuncoupled$ of our toy example has multiple invariant measures; indeed, for any $\alpha \in [0,1]$,  all  functions $\mu$ on $\{-1, 1\}^N$ of the form
\begin{equation*}
    \mu(\mathbf{v}') = \begin{cases}
    \alpha & \text{if } \mathbf{v'} = -\absorbing, \\
    1-\alpha & \text{if } \mathbf{v}' = \absorbing, \\
     0 & {\text{for any other } \bv' \in \{-1, 1\}^N }\, 
    \end{cases}
\end{equation*}
are invariant measures for the dynamics - and, since all the states except the absorbing states are transient,  these measures are the only ones that matter for the long time behaviour of the dynamics $\fastuncoupled$.  
In particular, the measures $\mu^{(\absorbing)}$ and $\mu^{(-\absorbing)}$,  corresponding to the choice $\alpha=0$ and $\alpha =1$, respectively, are invariant,  and the same holds for  each of the measures $\mu^{\mathbf{v}}_{\infty}$ defined as
\begin{equation}\label{eqn:limMeas}
    \mu^{\mathbf{v}}_{\infty}(\mathbf{v}') = \begin{cases}
    q(\mathbf{v}) & \text{if } \mathbf{v'} = -\absorbing, \\
    (1-q(\mathbf{v})) & \text{if } \mathbf{v}' = \absorbing, \\
     0 & \text{for any other } \bv' \in \{-1, 1\}^N \,.
    \end{cases}
\end{equation}
Note that the above is a family of probability measures on $\{-1,1\}^N$,  indexed by the initial datum $\mathbf{v}$. It is easy to show (see Lemma \ref{lemma:fastProcessConv}) that if $V_0^{\lambda, \mathbf{v}}=\mathbf{v}$ then the law of $\fastuncoupled$ converges to $\mu^{\mathbf{v}}_{\infty}$. In other words, given the initial datum, we know the corresponding limiting invariant measure.
 }
\end{Note}

{\em Step 1. The uncoupled regime.}  The only structure that matters  in our toy example is the fact that the fast process is a CTMP (on any finite state space $\statespace$) with multiple  absorbing states,  $\absorbing^{(1)}, \dots , \absorbing^{(L)}$, and that, for each initial value $\bv$, with probability one,  one of these absorbing states is reached by the dynamics; let $q^{(i)}(\bv)$ be the probability that $\fastuncoupled$ is absorbed in $\absorbing^{(i)}$ (if $V_0^{\lambda,\bv}=\bv$).   It is now easy to see that, under these assumptions on the fast process (and still assuming that $\fastuncoupled$ is independent of $\slow$), if the slow process evolves according to 
\eqref{slowergodic}, 
as $\lambda \rightarrow \infty$, $\slow$ will converge to the solution of 
\begin{equation}\label{limita}
d\slowbar = a(\slowbar, \zeta^{\mathbf{v}}) dt \, \quad \bar X_0^{\bx, \bv} = \bx \, 
\end{equation}
where now $\zeta^{\mathbf{v}}$ is a discrete random variable which can take $L$ values and is equal to $\absorbing^{(i)}$ with probability $q^{(i)}(\bv)$. 
In words, the limiting dynamics is as follows: we have $L$ drifts at our disposal to choose from, namely $a^{(1)}(\bz):=a(\bz, \absorbing^{(1)}), \dots, a^{(L)}(\bz):=a(\bz, \absorbing^{(L)})$. We roll a dice (with $L$ faces): if the dice lands on the $i$-th face then the ODE we run with is the one with drift $a^{(i)}$. To reconcile with \eqref{eqn: informalstatementlimit}, it is worth noting that
\begin{equation}\label{defmui}
a^{(i)}(\bz):=a(\bz, \absorbing^{(i)}) = \sum_{\bv \in \statespace} a(\bz, \bv) \mu^{(i)}(\bv) \, ,
\end{equation}
where $\mu^{(i)}$ is the measure on $\statespace$ defined as $\mu^{(i)}(\absorbing^{(i)}) = 1$ and $\mu^{(i)}(\bv) = 0$ if $\bv \neq \absorbing^{(i)}$ (clearly all the measures $\mu^{(i)}$ are invariant for the fast process, similarly to the measures $\mu^{(\absorbing)}$ and $\mu^{(-\absorbing)}$ for our toy example). This result is a consequence of Theorem \ref{thm:averaging1Erg} so we don't prove it explicitly, but we have presented it here as an intermediate step for explanatory purposes. 

 {\em Step 2. The fully coupled regime.} We now want to understand the limiting dynamics when the fast process is coupled to the slow process. So, assume the slow process evolves according to  \eqref{eqn:fullycoupled}, 
and  $\fastcoupled$ is a $(\tau_t^{\lambda}, \cP_{\slow})$ CTMP. In this case one can use a strategy which combines the intuition given by our toy example with the standard averaging approach for coupled processes: we consider the `frozen process',  obtained from the fast process by freezing the state of the slow component, say to $\slow = \bx$. This frozen process, denoted by $\frozenintro$,  is then a $(\tau_t^{\lambda}, \cP_{\bx})$--CTMP.  Assuming that $\frozenintro$ has $L$ absorbing states (which are the same, irrespective of the value of $\bx$ we fix), and that, given any initial datum $\tilde{V}_0^{\lambda, \bx, \bv} = \bv$,  there is probability one of reaching  one of them, one can  consider the probability $q^{(i)}(\bx, \bv)$ of $\frozenintro$ being absorbed in $\absorbing^{(i)}$. The limiting  dynamics is then given by \eqref{limita}, where now $\zeta^{\mathbf{v}}$ is replaced by $\zeta^{\mathbf{x},\mathbf{v}}$, where $\zeta^{\mathbf{x}, \bv}$ is equal to $\absorbing^{(i)}$ with probability $q^{(i)}(\bx, \bv)$. This fact is proven in Theorem \ref{thm:averaging1}. Once this result is proved, since absorbing states are just ergodic classes of cardinality one,  the main  theorem follows as a straightforward extension of Theorem \ref{thm:averaging1}, see Section \ref{sec:ergClasses}.

\begin{Note}\label{note:comments on main result} \textup{ Let us make some comments on our main result, which we informally stated in Section \ref{subsec:informalstatement}.
\\
  $\bullet$ There are at least  two main differences between doing averaging when the fast (frozen) process is ergodic and when it is not: i) whether the   slow-fast processes are coupled or not, in the non-ergodic setting the limiting dynamics \eqref{eqn: informalstatementlimit}  depends on the initial datum $\bv$ of the fast dynamics. This is not true when the fast process is ergodic as, in view of ergodicity, the fast process would forget  its initial state and converge to the invariant measure, irrespective of its initial datum $\mathbf{v}$; 
ii) in the non-ergodic case the limiting dynamics is random, even if the slow motion is governed by an ODE. This is in contrast with the case in which the fast process is ergodic.  Indeed, in this case, if the slow component is an ODE,  the limiting {\em averaged}   dynamics is always deterministic, see \eqref{eqn:averaged} and \eqref{intro:averagedcoupled}. For the sake of completeness let us  point out that in the homogenization regime (see \cite[Chapters $11$ and $18$]{pavliotis2008multiscale}) the limiting dynamics can be stochastic even if the slow variable is described by an ODE; however stochasticity in the homogenization setting comes from the fact that the limit can be a stochastic differential equation (SDE), that is, an ODE perturbed by random noise. The limiting dynamics \eqref{eqn: informalstatementlimit} is random in a different sense, namely in  the sense that the coefficient of the averaged dynamics is random, so \eqref{eqn: informalstatementlimit} is a random ODE.   \\
$\bullet$ There is a natural question concerning comparison with the case in which the fast process is metastable (more precisely, the case in which the frozen process is metastable, for every value of $\bx$). We are thinking for example of the Langevin equation in double well potential, see e.g. \cite{pavliotis2014stochastic}.  The modelling of the effects of the fast process is a very delicate matter, see e.g. \cite{weinan2011principles} and we don't discuss it here. However we point out that metastable processes are processes with {\em one} invariant measure, but such a measure is (highly) multimodal. Because of uniqueness of the invariant measure, if the fast process is metastable the averaging regime can  be tackled with known techniques, and the results of this paper are not needed. In particular, if the slow variable is described by an ODE,  the averaged process is \eqref{intro:averagedcoupled}, and it is deterministic. In the many-invariant measures regime of this paper the averaged process is given by \eqref{eqn: informalstatementlimit}, and it is random.\\
$\bullet$ In this work we assume that the frozen process will eventually be absorbed by/reach an ergodic class  (Assumption \ref{ass:pErg} \eqref{ass:pMinorErg} in  Section \ref{sec:ergClasses}) - which, from the point of view of migration, can be interpreted either as the group eventually reaching one of many possible targets or, in the case of two targets, as successful/failed migration. Moreover, we assume that the frozen process has the same (number of) ergodic classes, irrespective of $\bx$ (Assumption \ref{ass:pErg} \eqref{ass:p:AbsErg} in  Section \ref{sec:ergClasses}). This assumption will hold or not depending on the model at hand.  In particular, this assumption is satisfied  in the applications we have  in mind (on which we will comment further below) and  in the queueing systems studied in \cite{switchingDiffusionsPaper},  but it does not hold in other scenarios. Consider for example McKean-Vlasov type dynamics, which are non-linear SDEs obtained in the large particle limit of interacting particle systems, see e.g. \cite{sznitman1991topics}.  Depending on the choice of drift and on   the strength of the  noise coefficient, these equations may exhibit a phase transition \cite{dawson1983critical, herrmann2010non, carrillo2020long, angeli2023well}: typically, for large values of the noise the dynamics has only one invariant measure but for small values of the noise the SDE can have multiple invariant measures. If a McKean-Vlasov equation of this type was used to model the fast process (and the slow variable appeared in the noise coefficients of the fast process, scenario which is at the moment work in progress from the second author and collaborators) then clearly the assumption that we make in this paper would not hold. However, still within the class of McKean-Vlasov dynamics, evolutions inspired by the Vicsek model have been shown to have the same number of invariant measures, irrespective of the strength of the noise - phenomenon which goes under the name of {\em unconditional flocking}, see \cite{butta2022non, butta2018non,garnier2016mean, bertini2014synchronization}. In this case the assumption we make in this paper would be satisfied.  \\
$\bullet$ The proof of our main result is constructive. The main trick there is to introduce not only  the frozen process $\fastFrozen$, which makes use of the same transition matrix $\cP_{\bx}$ every time the Poisson clock goes off, but also a further intermediate process, $V^{\lambda, \bx, \bv, \ell}_t$, where $\ell=(\bx^{(j)})_{j=1}^{\infty}$ is a fixed sequence of points in $\R^N$. Such a process moves according to the transition probability $\cP_{\bx^{(j)}}$ when the Poisson clock goes off for the $j$-th time: intuitively, if the sequence $(\bx^{(j)})_{j=1}^{\infty}$ was a discretization of the slow dynamics, then $V^{\lambda, \bx, \bv, \ell}_t$ would remain close to  $\fastcoupled$, see Section \ref{sec:fully-coupled regime} for details. 
  }  
\end{Note}

Let us now further comment on motivation for this work and, more broadly, for studying multiscale dynamics when the fast process has multiple invariant measures.  The work of this paper was first inspired  by the collective navigation model introduced in \cite{painter}. In \cite{painter}, a piecewise deterministic Markov process is used to model the position and velocity of each individual in a group of animals. At random  (Poisson) times  an animal makes a decision on its heading based on the information available to it, and the authors distinguish between inherent information (obtained e.g. through the environment) and the collective information (obtained through the headings of the other animals in the group). Each type of information can be acquired at different frequency and one can imagine that one of them is obtained much faster than the other, creating a scale separation.  The toy model we have presented can be seen as  a simplified version of the model \cite{painter}, where we focus on just one type of information being acquired (the "fast one"). 

Other collective navigation models where  the study of multiscale systems in the non-ergodic setting is relevant are e.g. the recent \cite{couzinGeometrical, couzin2} where a model for group navigation   of animals towards one of two targets is introduced - again,  a spin model of the velocities was used, in conjunction with experimental data of fish moving towards targets.    In this model the animals initially move towards the midpoint of the targets (that is, they are initially in the "compromise region"); after a certain  point in space they transition from moving towards the midpoint of the targets  to deciding which of the two targets they will  move towards (this region is referred to as the ``decision'' region) and this can be viewed as a changing of the structure of the velocity process,  from having one invariant measure to two. If one starts the animals in the ``decision'' region then a similar effect can be attained using our toy model. If, however, the group starts in the ``compromise'' region, then the work of \cite{friedlin} is more relevant, as the particles move from a region with one invariant measure (taking the average heading of both targets) to two invariant measures (moving towards only one of the two targets). In either case, the models and data in \cite{couzinGeometrical, couzin2} and the references therein are indicative of the fast dynamics (in this case a process describing the velocities of all the particles) exhibiting more complicated invariant measure structure than universal uniqueness. 

Aside from these specific models, the study of multiscale systems when the fast process has multiple invariant measures naturally emerges when considering multiscale interacting particle systems, see e.g. \cite{delgadino, barre2021fast, gomes2018mean} to mention just a few references on the matter, which have recently attracted a lot of attention in stochastic analysis, motivated by applications to the study of multiagent systems in social dynamics, animal migration, opinion formation, deep learning etc. (note that the models we consider here can be seen as multiscale interacting particle systems). In this context one often  needs to consider two limits, the large population size limit ($N\rightarrow \infty$, where $N$ is the number of particles in the population) and the scale separation limit ($\epsilon \rightarrow 0$, to use the notation of the cited references, $\epsilon =1/\lambda$ to compare with our notation). Typically, when $N$ is fixed the fast process has a unique invariant measure, so one knows how to take the limit $\epsilon \rightarrow 0$ first and $N\rightarrow \infty$ afterwards. However, if one lets $N\rightarrow \infty$ first, the resulting dynamics can be of McKean-Vlasov type and exhibit multiple invariant measures. While it is known that in general these two limits do not commute, understanding the second limit ($N\rightarrow \infty$ first) is still  largely and open problem, which the second  author of this paper and collaborators are working on.\\

\subsection{Relation to the literature} \label{subsec:relation to literature}
The literature on averaging 
(for SDEs, stochastic partial differential equations (SPDEs), jump processes etc) in the case in which the fast (frozen) process has a unique invariant measure  is  vast and by now well-developed. Hence, a complete review is outside of the scope of this paper, and we only mention the excellent review books \cite{pavliotis2008multiscale, weinan2011principles} and some recent results, which include:  the case when  the fast process is in compact  \cite{xuemei} or non-compact state space \cite{pardoux2001, pardoux2003},  time dependent  \cite{liurocknerTime, UDA20211},  non-linear (in the sense of McKean) \cite{spiliopolus}.  A first result which proves convergence to the limiting dynamics to hold uniformly in time has also been produced \cite{crisan2022poisson}; the non-Markovian setting \cite{xue-meiFrac} and  The case in which the fast process is periodic have been investigated as well, see e.g. \cite{UDA20211} and references therein.  In comparison, the study of the case in which the fast process is non-ergodic is at early stages and there are, to the best of our knowledge, very few works on the matter. In particular,  while completing this paper we became aware of the works \cite{friedlin},\cite{switchingDiffusionsPaper} which seem to be the only other two works  who are close in spirit and context  to the present paper. The specific setup of \cite{friedlin}, \cite{switchingDiffusionsPaper}  and the methods of proof used there are different from ours, but there are similarities, so we come to compare.   In \cite{friedlin} the authors consider slow-fast systems in which the slow variable evolves in one dimension (in $\R$) according to either an ODE or an SDE,  and the fast process is a Markov Chain on finite state space $\statespace$. When the slow process evolves according to an ODE, say $dX_t = a(X_t, V_t) dt$,  they make the following assumption: the function $a$ is always strictly positive (so that $X_t$ is increasing) and when $x<0$ the fast process $V_t$ has only one invariant measure, say $\mu^{(0)}$, while for $x\geq 0$ it has two ergodic classes, and, correspondingly, two invariant measures, $\mu^{(1)}$ and $\mu^{(2)}$. The limiting dynamics is then as follows: assuming $X_0=x<0$, the dynamics evolves according to 
 $$
 d\bar X_t = \bar a^{(0)} (\bar X_t) dt, \quad \bar X_0=x , \quad \bar a^{(0)}(\bx): = \sum_{\mathbf v \in \statespace} a(\bx,\mathbf v) d\mu^{(0)}(\mathbf v) ,  
$$
until it hits the origin $x=0$; at that point we flip a coin and with probability $q^{(i)}$, $i \in \{1,2\}$, for which an expression is provided,   the process  $\bar X_t$ continues according to 
$$
 d\bar X_t = \bar a^{(i)} (\bar X_t) dt, \quad \bar X_0=x , \quad a^{(i)}(\bx): = \sum_{\bv \in E^{(i)}} a(\bx,\bv) d\mu^{(i)}(\bv) \,.
$$
In  \cite{friedlin}, like in the present paper, once the slow variable becomes positive, the fast process `picks' an ergodic class and the slow process is such that the fast process will remain in that ergodic set.  \\
There are clear similarities between the setting of \cite{friedlin} and the setting of this paper, but also key differences. Firstly, our setting is `dimension independent', in the sense that the slow process can live in any dimension, not just on the real line. 
 In \cite{friedlin} the `flip of the coin' happens when the slow process hits the origin and the probabilities $q^{(i)}$ of being absorbed in either ergodic class do not depend on the initial datum of the fast dynamics (or on the initial datum of the slow dynamics either). Thus, the limiting dynamics does not depend on the initial condition of the fast process either. This is because in the setting of \cite{friedlin} the fast process has no transient states. As a consequence  of this, it is natural in their case to think of the limiting process as a process on a graph, with the graph as in \cite[Figure $1$]{friedlin}. In our case we could think of the process as living on a family of graphs, indexed by the initial conditions $(\bx, \bv)$ (or simply $\bv$ in the uncoupled case). However, in our setup we find it simpler to think of it as a random ODE. 

 In \cite{switchingDiffusionsPaper} the slow variable is a jump-diffusion process, where the jump part can force the fast process to move between ergodic classes and the fast process evolves on a finite state space. The number of ergodic classes is fixed, irrespective of $\bx$, and there are no transient states.  
 
 Another related stream of literature is the one concerned with  singularly perturbed ODEs, see for example \cite{artstein_vigodner_1996, Artstein2002ONSP,Arnold,dumortier1996canard,KrupaCanard}, and references therein.  We also flag up \cite{freidlin2004diffusion}, which tackles the issue of multiple invariant measures for the fast dynamics in the context of multiscale Partial differential equations.
 
Finally, we mention that the processes considered in this paper can be seen as systems of interacting Piecewise Deterministic Markov Processes 
(PDMPs, also referred to as velocity-jump processes or run-and-tumble); see \cite{davis1984piecewise}. PDMPs  are very popular  both as modelling tools in mathematical biology,  see \cite{lawley}, and as sampling  mechanisms  \cite{BERTAZZI202291, roberts}.

\section{Non fully-coupled regime: toy example}\label{sec:non-coupled}

Throughout this section, let $\slow, \fastuncoupled$ be the process of the toy example introduced in  Section \ref{subsec:Heuristics}, see \eqref{ass:pFunc}-\eqref{eqn:toyexample absorbing}, and let $\slowbar$ be as in  \eqref{eqn:avgedProcess}. Then the following holds.

\begin{prop}\label{thm:averaging} 
For any $t>0$ there exists a constants $K>0$ (possibly dependent on $t$, but independent of $\lambda$) and a constant $c_1>0$ such that 
		\begin{equation}\label{eqn:conclnonfullycoupled}
			\left|\mathbb{E}\big[f\big(X^{\lambda,\mathbf{x}, \mathbf{v}}_t \big) \big]-  \mathbb{E}\left[f(\bar{X}^{\mathbf{x},\mathbf{v}}_t)\right]\right| \leq K\|f\|\left( \frac{1}{\sqrt{\lambda}}+e^{-c_1 \sqrt{\lambda}}\right)  \, ,  
		\end{equation}
		for all $\mathbf{x} \in \R^N, \mathbf{v} \in \{-1,+1\}^N$ and $f\colon \R^N \rightarrow \mathbb{R}$ such that $f \in C^1_b(\R^N);$\footnote{This is the space of continuous and bounded real-valued functions on $\R^N$ with continuous and bounded derivative.} in the above $\|f\| \coloneqq \|f\|_\infty + \sum_{i} \|\partial_{x_i} f\|_{\infty}$ and $\| \cdot \|_{\infty}$ is the supremum norm. 
\end{prop}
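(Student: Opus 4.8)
The plan is to exploit the fact that in the decoupled toy model the slow process is just the running time integral of the velocity, $X^{\lambda,\mathbf{x},\mathbf{v}}_t = \mathbf{x} + \int_0^t V^{\lambda,\mathbf{v}}_s\, ds$, together with the observation that as $\lambda\to\infty$ the velocity $V^{\lambda,\mathbf{v}}$ is absorbed in $\{\mathbf{e},-\mathbf{e}\}$ essentially immediately. First I would record two structural facts about $V^{\lambda,\mathbf{v}}$. Its embedded jump chain (transition matrix $\mathcal{P}$ of \eqref{ass:pFunc}--\eqref{eqn:toyexample absorbing}) does not depend on $\lambda$ --- only the rate $N\lambda$ of the Poisson clock does --- and by the structure of $\mathcal{P}$ and \cite[Theorem 1.3.2]{norris1998markov} this chain is, from any starting state, absorbed in $\{\mathbf{e},-\mathbf{e}\}$ after a finite number of jumps $\nu$; being the absorption time of a finite-state chain that is absorbed with probability one, $\nu$ has a geometric tail, $\mathbb{P}(\nu>m)\le C\rho^m$ for some $\rho\in(0,1)$, uniformly in the starting state and independently of $\lambda$. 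Writing $T^\lambda$ for the corresponding absorption time of $V^{\lambda,\mathbf{v}}$ and $\zeta^{\mathbf{v}} := V^{\lambda,\mathbf{v}}_{T^\lambda}\in\{\mathbf{e},-\mathbf{e}\}$, the definition of $q(\mathbf{v})$ says that $\zeta^{\mathbf{v}}$ has exactly the law prescribed in \eqref{eqn:avgedProcess}, so I would realise $\bar X^{\mathbf{x},\mathbf{v}}$ on the same probability space using this $\zeta^{\mathbf{v}}$. The second fact --- essentially Lemma \ref{lemma:fastProcessConv} --- is that $\mathbf{e},-\mathbf{e}$ are fixed points of $\mathcal{P}$, so $V^{\lambda,\mathbf{v}}_s\equiv\zeta^{\mathbf{v}}$ for every $s\ge T^\lambda$.

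\textbf{The decomposition.} With this coupling fixed, the error splits through the identity
\[
X^{\lambda,\mathbf{x},\mathbf{v}}_t-\bar X^{\mathbf{x},\mathbf{v}}_t=\int_0^t\big(V^{\lambda,\mathbf{v}}_s-\zeta^{\mathbf{v}}\big)\,ds=\int_0^{T^\lambda\wedge t}\big(V^{\lambda,\mathbf{v}}_s-\zeta^{\mathbf{v}}\big)\,ds,
\]
the second equality because the integrand vanishes for $s\ge T^\lambda$; since each component of $V^{\lambda,\mathbf{v}}_s-\zeta^{\mathbf{v}}$ lies in $[-2,2]$, this yields the pathwise bound $\|X^{\lambda,\mathbf{x},\mathbf{v}}_t-\bar X^{\mathbf{x},\mathbf{v}}_t\|_\infty\le 2\,(T^\lambda\wedge t)$. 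I would then estimate $\mathbb{E}\big|f(X^{\lambda,\mathbf{x},\mathbf{v}}_t)-f(\bar X^{\mathbf{x},\mathbf{v}}_t)\big|$ by splitting over $\{T^\lambda\le\lambda^{-1/2}\}$ and its complement: on the first event, the pathwise bound together with $|f(a)-f(b)|\le\|f\|\,\|a-b\|_\infty$ (valid for $f\in C^1_b$) contributes at most $2\|f\|\lambda^{-1/2}$, while on the complement I would use only $|f(a)-f(b)|\le 2\|f\|_\infty\le 2\|f\|$, contributing $2\|f\|\,\mathbb{P}(T^\lambda>\lambda^{-1/2})$. So \eqref{eqn:conclnonfullycoupled} reduces to showing $\mathbb{P}(T^\lambda>\lambda^{-1/2})\le C'e^{-c_1\sqrt\lambda}$.

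\textbf{The tail estimate.} For that last step I would write $T^\lambda\overset{d}{=}\frac1{N\lambda}\sum_{j=1}^{\nu}\xi_j$, where $(\xi_j)$ are i.i.d.\ $\mathrm{Exp}(1)$ independent of $\nu$, so that for any integer $m\ge1$, by monotonicity of the partial sums,
\[
\mathbb{P}\big(T^\lambda>\lambda^{-1/2}\big)=\mathbb{P}\Big(\textstyle\sum_{j=1}^{\nu}\xi_j>N\sqrt\lambda\Big)\le\mathbb{P}(\nu>m)+\mathbb{P}\Big(\textstyle\sum_{j=1}^{m}\xi_j>N\sqrt\lambda\Big).
\]
Choosing $m=\lfloor N\sqrt\lambda/2\rfloor$, the first term is $\le C\rho^m\le C e^{-c\sqrt\lambda}$ by the geometric tail, while the second is a Chernoff bound for a $\mathrm{Gamma}(m,1)$ variable evaluated at roughly twice its mean, hence $\le (2/e)^m\le C e^{-c\sqrt\lambda}$. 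Combining, I get $\mathbb{P}(T^\lambda>\lambda^{-1/2})\le C'e^{-c_1\sqrt\lambda}$ and thus \eqref{eqn:conclnonfullycoupled} for large $\lambda$; for $\lambda$ in a bounded range the inequality is trivial since the left-hand side never exceeds $2\|f\|_\infty$ while $\lambda^{-1/2}$ stays bounded below, so $K$ can simply be enlarged.

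\textbf{Where the difficulty is.} I expect no serious obstacle: the content is in the set-up --- putting $X^\lambda$ and $\bar X$ on one probability space sharing the absorption state $\zeta^{\mathbf{v}}$, and noticing that the absorption probabilities of a CTMP coincide with those of its embedded chain and are therefore independent of $\lambda$ --- while the only computation, the tail of $T^\lambda$, is routine (Gamma and geometric tails). The one point to handle with a little care is that one never needs $\zeta^{\mathbf{v}}$ to be ``the same across $\lambda$'': for each fixed $\lambda$ it is enough that $(X^{\lambda,\mathbf{x},\mathbf{v}}_t,\bar X^{\mathbf{x},\mathbf{v}}_t)$ be built from a single realisation of the jump chain and one Poisson clock. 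I note finally that a Wald estimate $\mathbb{E}[T^\lambda\wedge t]\le\mathbb{E}[T^\lambda]=\mathbb{E}[\nu]/(N\lambda)$ (using $\mathbb{E}[\nu]<\infty$) would replace $\lambda^{-1/2}$ by the sharper $\lambda^{-1}$, but the stated rate $\lambda^{-1/2}$ is all that is needed here and is the one that persists in the fully coupled regime.
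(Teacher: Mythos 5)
Your proof is correct, but it takes a genuinely different route from the paper's. The paper argues at the level of laws: it fixes an intermediate time $t_0=\lambda^{-1/2}$, decomposes $\mathbb{E}[f(X^{\lambda,\mathbf{x},\mathbf{v}}_t)]$ by conditioning on whether $V^{\lambda,\mathbf{v}}_{t_0}$ equals $-\mathbf{e}$, $\mathbf{e}$, or neither, invokes Lemma \ref{lemma:fastProcessConv} (total-variation convergence of the law of the fast process at rate $e^{-c_1 t\lambda}$) to control the conditioning probabilities, and then compares the conditional dynamics after $t_0$ with the two deterministic flows in \eqref{eqn:avgedSG}, using the Lipschitz continuity of $f$ and the bound $\|X^{\lambda,\mathbf{x},\mathbf{v}}_{t_0}-\mathbf{x}\|\le t_0\sqrt{N}$. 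You instead construct a pathwise coupling, realising $\bar X^{\mathbf{x},\mathbf{v}}$ from the actual absorption state $\zeta^{\mathbf{v}}=V^{\lambda,\mathbf{v}}_{T^{\lambda}}$; this is legitimate because only the marginal law of $\bar X^{\mathbf{x},\mathbf{v}}$ enters the statement and the absorption probabilities of the CTMP coincide with those of its $\lambda$-independent embedded chain. Everything then reduces to the tail of the absorption time $T^{\lambda}$, which you bound using the same two ingredients that power the paper's Lemma \ref{lemma:fastProcessConv} (a geometric tail for the number of jumps to absorption, uniform in the initial state, plus a Chernoff/Gamma estimate in place of the paper's direct Poisson computation), and the small-$\lambda$ regime is correctly absorbed into the constant. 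What your route buys: constants independent of $t$, a very compact argument, and, as you observe, a sharper $O(\lambda^{-1})$ rate via Wald's identity, consistent with the paper's footnote that the stated rate is not optimal. What it costs: it leans on the special structure of the toy model --- drift equal to the velocity, fast process decoupled, absorbed state determining the limiting drift exactly --- so the pathwise identity $X^{\lambda,\mathbf{x},\mathbf{v}}_t-\bar X^{\mathbf{x},\mathbf{v}}_t=\int_0^{T^{\lambda}\wedge t}\bigl(V^{\lambda,\mathbf{v}}_s-\zeta^{\mathbf{v}}\bigr)\,ds$ has no direct analogue in the fully coupled setting or with nontrivial ergodic classes, whereas the paper's conditioning-at-$t_0$ scheme is the one that carries over to Theorem \ref{thm:averaging1} and Theorem \ref{thm:averaging1Erg}.
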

The rate of convergence in \eqref{eqn:conclnonfullycoupled} is not optimal, see footnote \ref{nonoptimal}.
As already commented, by \eqref{ass:pFunc} with probability one the fast process  eventually hits one of the absorbing states. That is,
\begin{equation} \label{eqn:convAbs}
    \mathbb{P}[V^{\lambda, \mathbf{v}}_t \in \{\absorbing,-\absorbing\}] \rightarrow 1 , \quad \text{as } t \rightarrow\infty,
\end{equation}
for all $\lambda>0, \mathbf{v}\in \{-1,+1\}^N$.  One can obtain Proposition \ref{thm:averaging} assuming  \eqref{eqn:convAbs} only,   i.e one can use \eqref{eqn:convAbs} instead of \eqref{ass:pFunc}, but here we stick with the more concrete assumption \eqref{ass:pFunc}. Note also that 
we can write down  expectations  of the process \eqref{eqn:avgedProcess} explicitly: for any $f\colon \R^N \rightarrow \mathbb{R}$ we have
\begin{equation}\label{eqn:avgedSG}
    \mathbb{E}\left[f(\bar{X}^{\mathbf{x},\mathbf{v}}_t)\right] = q(\mathbf{v})f(\mathbf{x}-t\absorbing) + (1-q(\mathbf{v}))f(\mathbf{x}+t\absorbing),
\end{equation}
 and this fact will be explicitly used in the proof. Before proving the above proposition, we show in the next lemma that the  fast dynamics $V^{\lambda, \mathbf{v}}_t$ (defined after \eqref{eqn:toyslow}) converges to the limiting measure \eqref{eqn:limMeas}. This lemma would be a trivial statement if the fast process had only one invariant measure (as $\fastuncoupled$ is just a Markov Process on finite state space); even with the process having many invariant measures the proof is still relatively simple, but we could not find it in the literature so for completeness we include the proof of Lemma \ref{lemma:fastProcessConv} in the appendix. 
\begin{lemma}\label{lemma:fastProcessConv}
Let \eqref{ass:pFunc}-\eqref{eqn:toyexample absorbing} hold, and let $\mu_t^{\lambda, \bv}$ denote the law of $\fastuncoupled$. Then the following holds: there exist $K_1,c_1>0$ (independent of $\mathbf{v} \in \{-1,+1\}$, $\lambda>0$ and $t>0$) such that 
\begin{align}\label{eqn:convAbs0}
&|\mu_{t}^{\lambda, \mathbf{v}}(\{\absorbing,-\absorbing\}) - 1| 
\leq K_1\rate{t\lambda} \\
\label{eqn:convAbs1}
    &|\mu_{t}^{\lambda, \mathbf{v}}(-\absorbing) - q(\mathbf{v})|
    \leq K_1\rate{t\lambda}\\
    \label{eqn:convAbs2}
        &|\mu_{t}^{\lambda, \mathbf{v}}(\absorbing) - (1-q(\mathbf{v}))|
        \leq K_1\rate{t\lambda}.
\end{align}
Hence, there exists $K>0$ such that 
\begin{equation}\label{eqn:convMeas}
    \| \mu_{t}^{\lambda, \mathbf{v}} - \mu^{\mathbf{v}}_{\infty} \|_{\text{TV}} \leq K\rate{t\lambda},
\end{equation}
where $\|\cdot\|_{TV}$ is the total variation norm, i.e $\|\mu_{t}^{\lambda, \mathbf{v}}\|_{TV} = \sum_{\bv' \in \fastDomain}|\mu_{t}^{\lambda, \mathbf{v}}(\bv')|$, and $\mu^{\mathbf{v}}_{\infty}$ has been introduced in \eqref{eqn:limMeas}.
\end{lemma}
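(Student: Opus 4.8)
The plan is to reduce all three estimates to a single statement — that $\fastuncoupled$ is absorbed in $\{\absorbing,-\absorbing\}$ within time $t$ with probability at least $1-K_1\rate{t\lambda}$ — and then to read off the individual bounds using the structural fact that the \emph{embedded jump chain} of $\fastuncoupled$ (hence the absorption probabilities $q(\bv)$ and $1-q(\bv)$) does not depend on $\lambda$: the parameter $\lambda$ only rescales the jump times, not which states are visited.

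For the absorption estimate I would first work with the discrete-time chain $(Y_k)_{k\ge 0}$ on $\statespace$ with transition matrix $\cP$, which has $\absorbing,-\absorbing$ as fixed points, and write $\tau_Y$ for its hitting time of $\{\absorbing,-\absorbing\}$. The structure \eqref{ass:pFunc} lets one flip the $-1$ (or $+1$) coordinates of any non-consensus state one at a time and reach $\absorbing$ (resp. $-\absorbing$) in at most $N$ steps along a path whose intermediate vertices are all non-consensus; each edge has positive probability, so each path does, and since $\statespace$ is finite $\delta\coloneqq\min_{\bw\notin\{\absorbing,-\absorbing\}}\mathbb{P}[\tau_Y\le N\mid Y_0=\bw]>0$. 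Iterating via the Markov property gives $\mathbb{P}[\tau_Y>k\mid Y_0=\bv]\le(1-\delta)^{\lfloor k/N\rfloor}$ for all $\bv$ and $k$. Next I would realize $\fastuncoupled$ as $Y_{\Pi_t}$, with $\Pi_t$ a rate-$N\lambda$ Poisson process independent of $(Y_k)$ (keeping $\Pi$ running after absorption is harmless since $\cP$ is the identity there), so that the absorption time $\tau$ of $\fastuncoupled$ satisfies $\{\tau>t\}=\{\tau_Y>\Pi_t\}$. Conditioning on $\Pi_t$ and using the bound above,
\[
\mathbb{P}[\tau>t]\le\mathbb{E}\big[(1-\delta)^{\lfloor\Pi_t/N\rfloor}\big]\le(1-\delta)^{-1}\,\mathbb{E}\big[\rho^{\Pi_t}\big]=(1-\delta)^{-1}e^{-N\lambda t(1-\rho)},
\]
with $\rho\coloneqq(1-\delta)^{1/N}\in(0,1)$ and the last equality being the Poisson moment generating function. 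Taking $K_1\coloneqq(1-\delta)^{-1}$ and $c_1\coloneqq N(1-\rho)$ (both depending only on $N$ and $\cP$, hence independent of $\bv,\lambda,t$) yields \eqref{eqn:convAbs0}, since $\mathbb{P}[\tau>t]=1-\mu_t^{\lambda,\bv}(\{\absorbing,-\absorbing\})$.

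For \eqref{eqn:convAbs1} I would use that $-\absorbing$ is absorbing, so $\{V_t^{\lambda,\bv}=-\absorbing\}=\{\tau\le t\}\cap\{\text{eventual fate is }-\absorbing\}$, and the latter event has probability $q(\bv)$ for every $t$ and $\lambda$; hence $0\le q(\bv)-\mu_t^{\lambda,\bv}(-\absorbing)=\mathbb{P}[\text{fate}=-\absorbing,\ \tau>t]\le\mathbb{P}[\tau>t]\le K_1\rate{t\lambda}$, and \eqref{eqn:convAbs2} is identical with $1-q(\bv)$ in place of $q(\bv)$. Finally, since $\mu^{\bv}_{\infty}$ charges only $\{\absorbing,-\absorbing\}$,
\[
\|\mu_t^{\lambda,\bv}-\mu^{\bv}_{\infty}\|_{\text{TV}}=\sum_{\bv'\notin\{\absorbing,-\absorbing\}}\mu_t^{\lambda,\bv}(\bv')+|\mu_t^{\lambda,\bv}(-\absorbing)-q(\bv)|+|\mu_t^{\lambda,\bv}(\absorbing)-(1-q(\bv))|\le 3K_1\rate{t\lambda},
\]
by \eqref{eqn:convAbs0}--\eqref{eqn:convAbs2}, so \eqref{eqn:convMeas} holds with $K=3K_1$.

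There is no genuine difficulty here; the only point that needs care is keeping the absorption estimate \emph{uniform in the initial state $\bv$} (so the constants do not degrade), which is exactly why the argument at the level of $(Y_k)$ extracts a single $\delta>0$ by minimising over the finitely many transient states, and the one conceptual observation worth stating explicitly is that $q(\bv)$ is a functional of the jump chain alone and therefore the same for all $\lambda$.
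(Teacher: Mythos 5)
Your proof is correct and follows essentially the same route as the paper's: the key estimate \eqref{eqn:convAbs0} is obtained exactly as in the paper (a uniform bound $<1$ on the probability that the embedded discrete chain avoids $\{\absorbing,-\absorbing\}$ for $N$ steps, iteration via the Markov property, then Poissonization and the Poisson generating function), and the total-variation bound is the same $3K_1$ computation. The only divergence is in deducing \eqref{eqn:convAbs1}--\eqref{eqn:convAbs2} from \eqref{eqn:convAbs0}: the paper bounds the \emph{sum} of the two deviations and then separates them using monotonicity in $t$ of $\mathbb{P}[V^{\lambda,\bv}_t=\pm\absorbing]$, whereas you use the identity $\{V^{\lambda,\bv}_t=-\absorbing\}=\{\tau\le t\}\cap\{\text{eventual fate}=-\absorbing\}$ (valid since $-\absorbing$ is absorbing, and the fate event has probability $q(\bv)$ independently of $\lambda$ because it is a functional of the jump chain), which yields $0\le q(\bv)-\mu_t^{\lambda,\bv}(-\absorbing)\le \mathbb{P}[\tau>t]$ in one stroke; this is, if anything, more direct than the paper's argument. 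One trivial caveat: your constants $K_1=(1-\delta)^{-1}$ and $\rho=(1-\delta)^{1/N}\in(0,1)$ degenerate if $\delta=1$, a case which can genuinely occur (e.g.\ $N=2$, where every transient state is absorbed in one step); the paper dismisses the analogous case with ``assume $b>0$, otherwise we are done'', and in your setup it is fixed by replacing $1-\delta$ with $\max(1-\delta,\tfrac12)$ in the iteration bound, which changes nothing else.
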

\begin{proof}
The proof of Lemma \ref{lemma:fastProcessConv} can be found in Appendix \ref{sec:AuxProofs}
    
\end{proof}

\begin{proof}[Proof of Proposition \ref{thm:averaging}]
We begin by defining $t_0 \coloneqq \frac{1}{\sqrt{\lambda}}$.\footnote{\label{nonoptimal} We note that this can be pushed to $\lambda^{-l}$ for any $l<1$, and that in this case, we recover the rate $\lambda^{-l}$ on the right hand side of \eqref{eqn:conclnonfullycoupled}. To see this, observe that in the following we only need that $t_0\lambda \rightarrow\infty$ as $\lambda\rightarrow\infty$. For simplicity we keep $t_0 = \frac{1}{\sqrt{\lambda}}$.} Using the properties of conditional expectation, and letting $t>t_0$ we can write the following
\begin{equation}\label{eqn:conditioning}
    \begin{split}
            \mathbb{E}\big[f\big(X^{\lambda,\mathbf{x}, \mathbf{v}}_t \big) \big]
            &= \underbrace{\mathbb{P}[ V^{\lambda, \mathbf{v}}_{t_0} =-\absorbing]\mathbb{E}\big[f\big(X^{\lambda,\mathbf{x}, \mathbf{v}}_t \big) | V^{\lambda, \mathbf{v}}_{t_0} = -\absorbing\big]}_{\eqqcolon A} \\
            &+ \underbrace{\mathbb{P}[ V^{\lambda, \mathbf{v}}_{t_0} = \absorbing]\mathbb{E}\big[f\big(X^{\lambda,\mathbf{x}, \mathbf{v}}_t \big) | V^{\lambda, \mathbf{v}}_{t_0} = \absorbing\big]}_{\eqqcolon B} \\
            &+ \underbrace{\mathbb{P}[ V^{\lambda, \mathbf{v}}_{t_0} \notin \{\absorbing, -\absorbing\}]\mathbb{E}\big[f\big(X^{\lambda,\mathbf{x}, \mathbf{v}}_t \big) | V^{\lambda, \mathbf{v}}_{t_0} \notin \{\absorbing, -\absorbing\}\big]}_{\eqqcolon C}
    \end{split}
\end{equation}
From \eqref{eqn:avgedSG} and \eqref{eqn:conditioning} we have
\begin{equation}\label{eqn:decomp}
\begin{split}
    &\left|\mathbb{E}\big[f\big(X^{\lambda,\mathbf{x}, \mathbf{v}}_t \big) \big]-  \mathbb{E}\left[f(\bar{X}^{\mathbf{x},\mathbf{v}}_t)\right]\right|\\ &\leq |A - q(\mathbf{v})f(\mathbf{x}-t\absorbing)| +  |B - (1-q(\mathbf{v}))f(\mathbf{x}+t\absorbing)| + |C|.
\end{split}
\end{equation}

Taking into account \eqref{eqn:decomp}, \eqref{eqn:conclnonfullycoupled} follows if we show there exists a constant $K>0$ such that the following three inequalities hold
\begin{enumerate}
\item
\begin{equation}\label{eqn:1}
    |A - q(\mathbf{v})f(\mathbf{x}-t\absorbing)| \leq K \|f\|\left(\frac{1}{\sqrt{\lambda}}+\rate{\sqrt{\lambda}}\right),
\end{equation}
\item
\begin{equation}\label{eqn:2}
    |B - (1-q(\mathbf{v}))f(\mathbf{x}+t\absorbing)| \leq K\|f\|\left(\frac{1}{\sqrt{\lambda}}+\rate{\sqrt{\lambda}}\right).
\end{equation}
\item Finally,
\begin{equation}\label{eqn:3}
      |C| \leq K\|f\|\rate{\sqrt{\lambda}}.
\end{equation}
\end{enumerate}
By using the fact that for any three random variables $X,Y,Z$, the following holds $\E[\E[X|Y,Z]|Z] = \E[X|Z]$, we can rewrite $A$ as
\begin{equation}\label{eqn:towerProp}\begin{split}
    A &= \mathbb{P}[ V^{\lambda, \mathbf{v}}_{t_0} =-\absorbing]\mathbb{E}\big[f\big(X^{\lambda,\mathbf{x}, \mathbf{v}}_t \big) | V^{\lambda, \mathbf{v}}_{t_0} = -\absorbing\big] \\
    &=\mathbb{P}[ V^{\lambda, \mathbf{v}}_{t_0} =-\absorbing]\mathbb{E}\big[\mathbb{E}\big[f\big(X^{\lambda,\mathbf{x}, \mathbf{v}}_t \big)| V^{\lambda, \mathbf{v}}_{t_0} = -\absorbing, X^{\lambda,\mathbf{x}, \mathbf{v}}_{t_0} \big]| V^{\lambda, \mathbf{v}}_{t_0} = -\absorbing \big].
\end{split}
\end{equation}
Now we use the fact that if $V^{\lambda, \mathbf{v}}_{t_0} = -\absorbing$, then
the dynamics of $X^{\lambda,\mathbf{x}, \mathbf{v}}_{t}$ is deterministic for $t\geq t_0$:
\begin{equation}\label{eqn:condBoth}
    \mathbb{E}\big[f\big(X^{\lambda,\mathbf{x}, \mathbf{v}}_t \big)| V^{\lambda, \mathbf{v}}_{t_0} = -\absorbing, X^{\lambda,\mathbf{x}, \mathbf{v}}_{t_0} \big] = f(X^{\lambda,\mathbf{x}, \mathbf{v}}_{t_0} - (t-t_0)\absorbing).
\end{equation}
Using \eqref{eqn:towerProp} and \eqref{eqn:condBoth}, we obtain
\begin{equation}\label{eqn:A}
    A = \mathbb{P}[ V^{\lambda, \mathbf{v}}_{t_0} =-\absorbing]\mathbb{E}\big[f(X^{\lambda,\mathbf{x}, \mathbf{v}}_{t_0} - (t-t_0)\absorbing)| V^{\lambda, \mathbf{v}}_{t_0} = -\absorbing].
\end{equation}
By \eqref{eqn:A} and the triangle inequality, we have
\begin{equation}\label{eqn:AqN}
\begin{split}
     &|A - q(\mathbf{v})f(\mathbf{x}-t\absorbing)| \\
     &\leq \underbrace{\mathbb{P}[ V^{\lambda, \mathbf{v}}_{t_0} =-\absorbing]|\mathbb{E}\big[f(X^{\lambda,\mathbf{x}, \mathbf{v}}_{t_0} - (t-t_0)\absorbing) | V^{\lambda, \mathbf{v}}_{t_0} = -\absorbing\big]-f(\mathbf{x}-t\absorbing)|}_{\eqqcolon D} \\ &+  |f(\mathbf{x}-t\absorbing)||\mathbb{P}[ V^{\lambda, \mathbf{v}}_{t_0} =-\absorbing] - q(\mathbf{v})|.
\end{split}
\end{equation}
Since $f$ is Lipschitz continuous, we also have \begin{equation}\label{eqn:D}
\begin{split}
       D &\leq \mathbb{E}\left[\left|f(X^{\lambda,\mathbf{x}, \mathbf{v}}_{t_0} - (t-t_0)\absorbing)-f(\mathbf{x}-t\absorbing)\right| \, \mid V^{\lambda, \mathbf{v}}_{t_0} = -\absorbing\right] \\ &\leq \|f\| \mathbb{E}\left[\|X^{\lambda,\mathbf{x}, \mathbf{v}}_{t_0}-\mathbf{x} + t_0\absorbing  \|_2 \, \mid V^{\lambda, \mathbf{v}}_{t_0} = -\absorbing\right] \\
       &\leq \|f\|\left(\mathbb{E}\left[\|X^{\lambda,\mathbf{x}, \mathbf{v}}_{t_0}-\mathbf{x}\|_2 \, \mid V^{\lambda, \mathbf{v}}_{t_0} = -\absorbing\right] + t_0\|\absorbing  \|_2 \right) \\
       &\leq  K\|f\|\cdot \frac{1}{\sqrt{\lambda}},
\end{split}
\end{equation}where $\|\cdot\|_2$ is the Euclidean norm. In the last inequality we have also used the fact that $X^{\lambda,\mathbf{x}, \mathbf{v}}_{t_0}$ is necessarily in a (Euclidean, closed) ball of radius $t_0\sqrt{N}$ around $\mathbf{x}$, which is due to the fact that each particle has velocity of magnitude $1$.
By \eqref{eqn:convAbs1} we have
\begin{equation}\label{eqn:fLeqEpsilon}
\begin{split}
        |f(\mathbf{x}-t\absorbing)|&|\mathbb{P}[ V^{\lambda, \mathbf{v}}_{t_0} =-\absorbing] - q(\mathbf{v})| \\
        &\leq \|f\|_{\infty}|\mathbb{P}[ V^{\lambda, \mathbf{v}}_{t_0} =-\absorbing] - q(\mathbf{v})| \leq K_1\rate{\sqrt{\lambda}}\|f\|_\infty,
\end{split}
\end{equation}
where $\|\cdot\|_\infty$ is the supremum norm.
From \eqref{eqn:AqN}, \eqref{eqn:D} and \eqref{eqn:fLeqEpsilon}, \eqref{eqn:1} follows.
The inequality \eqref{eqn:2} is shown in exactly the same way, using \eqref{eqn:convAbs2} instead of \eqref{eqn:convAbs1}.

We also have that \begin{equation*}\label{eqn:notInComponent}
    |C| \leq \mathbb{P}[ V^{\lambda, \mathbf{v}}_{t_0} \notin \{\absorbing, -\absorbing\}] \|f\|_{\infty} \leq K_1\rate{\sqrt{\lambda}}\|f\|_{\infty},
\end{equation*}
where we used \eqref{eqn:convAbs0}. Hence \eqref{eqn:3} is shown. Then by \eqref{eqn:decomp}, \eqref{eqn:1}, \eqref{eqn:2} and \eqref{eqn:3}, \eqref{eqn:conclnonfullycoupled} follows so the proof is complete.
\end{proof}

\section{Fully coupled regime}\label{sec:fully-coupled regime}
In this section we consider the setup of Step $2$ in Section \ref{subsec:Heuristics}; that is, $\slowProcess{\lambda,\mathbf{x},\mathbf{v}}{t}$ and $\fast_t$ are fully coupled, and evolve according to $\eqref{eqn:fullycoupled}$.
For ease of proof, as outlined in Step $2$ of Section \ref{subsec:Heuristics}, we split this section into two subsections; in Section \ref{sec:absStates} we start from the case when the frozen process $\fastFrozen_t$ has $\numAbs$ absorbing states $\mathbf{e}^{(1)}, \ldots,  \mathbf{e}^{(\numAbs)} \in \fastDomain$. Then in Section \ref{sec:ergClasses} we state and prove the more general result, Theorem \ref{thm:averaging1Erg}, when $\fastFrozen_t$ has $\numAbs$ ergodic classes.
First, we make the following assumption on $a$ which will hold throughout Section \ref{sec:absStates} and Section \ref{sec:ergClasses}.
\begin{assumption}\label{ass:driftSlow}
Let $a\colon \R^N \times\fastDomain \rightarrow \R^N$.
    For every $\mathbf{v} \in \fastDomain$, the function $a(\cdot, \mathbf{v})$ is continuous and bounded.
\end{assumption}
Note that since $\fastDomain$ is finite, Assumption \ref{ass:driftSlow} means, in particular, that the function $a$ is bounded and continuous, uniformly in $\mathbf{v}$. If  $\fastFrozen_t$ is the frozen process (which, we recall, is a $(\tau^{\lambda}_t, \transProbs{\mathbf{x}})$-CTMP) then $\fastFrozenDisc_k$ denotes the discrete-time version of the frozen process, namely the process that changes value at every time-unit (as opposed to when the Poisson clock goes off) by using the transition probability $\cP_{\bx}$.  In particular,  denoting the law of $\fastFrozen_t$ by $\fastFrozenLaw_t$ and the law of $\fastFrozenDisc_k$ by $\fastFrozenDiscLaw_k$, the following relation between $\fastFrozenLaw_t$ and $\fastFrozenDiscLaw_k$ holds:
\begin{equation}\label{eqn:condPoissonProcess}
    \fastFrozenLaw_t =  \sum_{k=0}^{\infty} \mathbb{P} \left[\tau_t^{\lambda} = k\right] \fastFrozenDiscLaw_k.
\end{equation}
Since  the slow and fast components are coupled, we can't study the fast dynamics independently of the slow one. For this reason we will use  two intermediate processes, the frozen process, $\fastFrozen_t$, and a process which, intuitively,  depends on a fixed sequence of states of the slow variable, $V_t^{\lambda, \bx, \bv, \ell}$, see Section \ref{sec:absStates} for a precise definition. 
\subsection{Frozen process with multiple absorbing states}\label{sec:absStates}
 In this subsection we assume that for every $\mathbf{x} \in \R^N$, the frozen process $\fastFrozen_t$ associated with the fast process $\fastcoupled$ appearing in \eqref{eqn:fullycoupled} has $\numAbs$ absorbing states $\mathbf{e}^{(1)}, \ldots,  \mathbf{e}^{(\numAbs)} \in \fastDomain$. We make the following assumption on the transition probabilities of the fast process.
\begin{assumption}\label{ass:p}
The following hold for the function $\transProbs{}\colon \R^N \times \fastDomain\times\fastDomain \rightarrow[0,1]$.
\begin{enumerate}[(i)]
\item \label{ass:pProb} Transition probabilities: For all $\mathbf{x}\in \R^N$ and $\mathbf{v} \in \fastDomain$, $\sum_{\mathbf{v}' \in \fastDomain} \transProbs{\mathbf{x}}(\mathbf{v},\mathbf{v}')=1.$
        \item \label{ass:p:Abs} The states $\absorbing^{(1)},\ldots, \absorbing^{(\numAbs)} \in \fastDomain$ are absorbing states: For all $1\leq i\leq \numAbs$ and $\mathbf{x}\in \R^N$,
            $\transProbs{\mathbf{x}}(\absorbing^{(i)}, \absorbing^{(i)}) = 1$.
        \item \label{ass:pMinor} Probability of absorption: There exist $\tilde{n}\geq1$ and $z_0<1$ such that for all $\bx \in \R^N, \bv \in \fastDomain$,\begin{equation*}
            \mathbb{P}[\fastFrozenDisc_{\tilde{n}} \notin \{\absorbing^{(1)}, \ldots,\absorbing^{(\numAbs)}\}] \leq z_0 .
        \end{equation*}
        \item \label{ass:pLip}  Lipschitzianity in $\bx$: There exists $K_0>0$ such that for all $\mathbf{v}\in \fastDomain$ and $\mathbf{x},\mathbf{x'} \in \R^N$,
    \begin{equation*}
        \sum_{\mathbf{v'} \in \fastDomain}|\transProbs{\mathbf{x}}( \mathbf{v}, \mathbf{v'}) - \transProbs{\mathbf{x'}} (\mathbf{v}, \mathbf{v'}) | \leq K_0\|\mathbf{x}-\mathbf{x'}\|_1
    \end{equation*}
    where $\|\mathbf{w}\|_1 = \sum^n_{i=1}|w_i|$ for any $\mathbf{w} \in \R^{N}$.
    \end{enumerate}
\end{assumption}
Because the fast process evolves in finite state space, $\eqref{ass:pProb}-\eqref{ass:pLip}$ of Assumption \ref{ass:p} imply the state space decomposition $\statespace = \{E^{(1)} \cup  \dots \cup E^{(L)} \cup \mathcal T \}$, outlined in Section \ref{subsec:informalstatement}, with $E^{(i)} =\absorbing^{(i)}$. Assumption \ref{ass:p} \eqref{ass:pMinor} implies that the fast process will be absorbed into some absorbing state $\absorbing^{(i)}$, and Assumption \ref{ass:p}\eqref{ass:p:Abs}  implies that slow variable will never have the effect of making the fast process change absorbing state. Moreover, Assumption \ref{ass:p} \eqref{ass:pMinor} requires the rate of absorption to be independent of $\bx$. This implies that the constants in the convergence rates of Lemma \ref{lemma41} and Lemma \ref{lemma:fastProcessConvX} are independent of $\bx$.

Let $\LawLimitPosDep$ be the measure on $\statespace$ defined as follows:
\begin{equation}\label{eqn:limMeasX}
    \LawLimitPosDep(\mathbf{v'}) \coloneqq \begin{cases}
    q^i(\mathbf{x}, \mathbf{v}) & \text{if } \mathbf{v'} = \absorbing^{(i)} \text{ for some }1\leq i\leq \numAbs, \\
     0 & \text{otherwise,}
    \end{cases}
\end{equation}
where $q^i(\mathbf{x},\mathbf{v})$ is the probability that $\fastFrozen_t$ is absorbed in $\absorbing^{(i)}$; see, for example, \cite[Chapter 3]{norris1998markov} for details on how to calculate $q^i(\mathbf{x},\mathbf{v})$ using the transition probabilities. 

The next lemma (which is the analogue of Lemma \ref{lemma:fastProcessConv} in the decoupled case) states that $\LawLimitPosDep$ is the limiting distribution of $\fastFrozen_t$.
\begin{lemma} \label{lemma41}
Let Assumption \ref{ass:p} hold. Then there exist $K, c_1>0$, independent of $\mathbf{x} \in \R^N$, $\mathbf{v} \in \fastDomain$, $\lambda>0$ and $t\geq 0$, such that 
\begin{equation}\label{eqn:convMeasXX}
    \| \tilde{\mu}_{t}^{\lambda, \mathbf{x}, \mathbf{v}} - \LawLimitPosDep \|_{\text{TV}} \leq K\rate{t\lambda},
\end{equation}
\end{lemma}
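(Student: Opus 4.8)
The plan is to mimic the structure of Lemma~\ref{lemma:fastProcessConv}, but now in the coupled setting, using the decomposition \eqref{eqn:condPoissonProcess} of the continuous-time law $\fastFrozenLaw_t$ into a Poisson-weighted sum of the discrete-time laws $\fastFrozenDiscLaw_k$. First I would reduce the continuous-time estimate \eqref{eqn:convMeasXX} to a discrete-time one: I claim there exist $K, \rho \in (0,1)$, independent of $\bx$, such that $\|\fastFrozenDiscLaw_k - \LawLimitPosDep\|_{\mathrm{TV}} \leq K \rho^k$ for all $k$. Given this, \eqref{eqn:condPoissonProcess} yields
\begin{equation*}
\|\fastFrozenLaw_t - \LawLimitPosDep\|_{\mathrm{TV}} \leq \sum_{k=0}^{\infty} \mathbb{P}[\tau_t^\lambda = k] \, \| \fastFrozenDiscLaw_k - \LawLimitPosDep\|_{\mathrm{TV}} \leq K \sum_{k=0}^\infty \mathbb{P}[\tau_t^\lambda = k] \rho^k,
\end{equation*}
and since $\tau_t^\lambda$ is Poisson with parameter $\lambda t$, the last sum is the probability generating function $e^{\lambda t(\rho - 1)}$, which is exactly of the form $\rate{t\lambda}$ with $c_1 = 1-\rho$ (absorbing $K$ into the prefactor). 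So the whole statement follows once the geometric decay of $\fastFrozenDiscLaw_k$ is established with constants uniform in $\bx$.

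For the discrete-time bound, fix $\bx$ and recall that under $\cP_\bx$ the states $\absorbing^{(1)},\dots,\absorbing^{(\numAbs)}$ are absorbing (Assumption~\ref{ass:p}\eqref{ass:p:Abs}) and everything else is transient; $\LawLimitPosDep$ is supported on the absorbing states with weights $q^i(\bx,\bv)$. The total variation distance $\|\fastFrozenDiscLaw_k - \LawLimitPosDep\|_{\mathrm{TV}}$ is controlled by two contributions: the mass $\mathbb{P}[\fastFrozenDisc_k \notin \{\absorbing^{(1)},\dots,\absorbing^{(\numAbs)}\}]$ still sitting on transient states, and the discrepancy $|\mathbb{P}[\fastFrozenDisc_k = \absorbing^{(i)}] - q^i(\bx,\bv)|$ on each absorbing state. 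Both are bounded by (a constant times) the former, since once the process is absorbed in $\absorbing^{(i)}$ it stays there, so $\mathbb{P}[\fastFrozenDisc_k = \absorbing^{(i)}] \uparrow q^i(\bx,\bv)$ and the gap is at most the non-absorbed mass. Assumption~\ref{ass:p}\eqref{ass:pMinor} says $\mathbb{P}[\fastFrozenDisc_{\tilde n} \notin \{\absorbing^{(1)},\dots,\absorbing^{(\numAbs)}\}] \leq z_0 < 1$ uniformly in $\bx$ and $\bv$; iterating via the Markov property (conditioning on the state at times $\tilde n, 2\tilde n, \dots$ and using that $z_0$ bounds the one-block non-absorption probability from every starting state) gives $\mathbb{P}[\fastFrozenDisc_{m\tilde n} \notin \{\absorbing^{(i)}\}_i] \leq z_0^m$, hence geometric decay $\leq K z_0^{k/\tilde n}$ in $k$ with $\rho = z_0^{1/\tilde n}$, and all constants depend only on $\tilde n, z_0$, not on $\bx$.

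The main obstacle — though it is more bookkeeping than conceptual — is ensuring uniformity in $\bx$ throughout: the point of stating Assumption~\ref{ass:p}\eqref{ass:pMinor} with $\tilde n$ and $z_0$ not depending on $\bx$ is precisely so that the geometric rate $\rho$ and the constant $K$ in the discrete bound are $\bx$-free, which then propagates through the Poisson averaging to give the $\bx$-independent $K, c_1$ claimed in the lemma. One should also be slightly careful that $\LawLimitPosDep$ itself genuinely is the $k \to \infty$ limit of $\fastFrozenDiscLaw_k$ — i.e. that absorption happens with probability one and the limiting weights are the $q^i(\bx,\bv)$ — but this is immediate from standard finite Markov chain theory (\cite[Chapter~3]{norris1998markov}) given the absorbing/transient decomposition forced by Assumption~\ref{ass:p}. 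No abstract machinery is needed; everything reduces to the one-block contraction estimate and the Poisson generating function computation.
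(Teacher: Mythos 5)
Your proposal is correct and follows essentially the same route as the paper: the paper itself proves Lemma \ref{lemma41} by declaring the argument "completely analogous" to that of Lemma \ref{lemma:fastProcessConv}, whose appendix proof uses exactly your ingredients — geometric decay of the non-absorbed mass for the discrete-time chain (here furnished directly, uniformly in $\bx$, by Assumption \ref{ass:p}\eqref{ass:pMinor}), the observation that the absorbed mass at each $\absorbing^{(i)}$ increases monotonically to $q^i(\bx,\bv)$ so the gap is controlled by the non-absorbed mass, and the Poisson conditioning \eqref{eqn:condPoissonProcess} with the generating-function computation yielding the $e^{-c_1 t\lambda}$ rate. Your emphasis on the $\bx$-uniformity of $\tilde n$ and $z_0$ is precisely the point the paper singles out in its one-line proof remark.
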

\begin{proof}
    We do not give details of the proof, since the argument is completely analogous to the one in the proof of Lemma \ref{lemma:fastProcessConv}. However, note that $K$ and $c_1$ in \eqref{eqn:convMeasXX} are independent of $\mathbf{x}$ in this case because of Assumption \ref{ass:p} $\eqref{ass:p:Abs}-\eqref{ass:pMinor}$.
\end{proof}
It is useful to introduce a further process $\fastProcessSeqCont_t$, depending on an extra parameter, namely on a sequence $l = \left(\mathbf{x}^{(j)}\right)_{j=1}^{\infty}$ with each $\mathbf{x}^{(j)} \in \R^N$. This process is a continuous time random process with an underlying Poisson clock (of rate $\lambda$), for which the transition matrix at the $k^{th}$ switch is $\transProbs{\mathbf{x}^{(k)}}$ (this process is time-inhomogenous). We denote the law of $\fastProcessSeqCont_t$ by $\fastProcessSeqContLaw_t$. We denote the marginal of the law of the fully coupled dynamics with respect to the fast component (i.e. the marginal obtained when the law of the slow component has been integrated out) by  $\mu_{t}^{\lambda, \mathbf{x}, \mathbf{v}}$. Now we state two lemmas and our main theorem; proofs are postponed. Lemma \ref{lemma:fastProcessConv1} studies the distance between the law of the frozen process and the law of $V_t^{\lambda, \bx, \bv , l}$, while Lemma \ref{lemma:fastProcessConvX} quantifies the distance between the (marginal) law of the fast process,  $\mu_t^{\lambda, \bx,\bv}$, and the limit measure of the frozen process, $\LawLimitPosDep$.
\begin{lemma}\label{lemma:fastProcessConv1}
Let Assumption \ref{ass:driftSlow} and Assumption \ref{ass:p} hold. Then there exists $K>0$ independent of $\mathbf{x} \in \R^N$, $\mathbf{v}\in \fastDomain$, $\lambda$ and $l\coloneqq\{\mathbf{x}^{(k)}\}^{\infty}_{k=1} \subset \bar{B}(\mathbf{x}, (1-z_0)/2K_0\tilde{n})$ such that
\begin{equation}\label{eqn:convMeasXinter}
    \| \fastFrozenLaw_t - \fastProcessSeqContLaw_t \|_{\text{TV}} \leq K \sup_{i}\{\|\mathbf{x}-\mathbf{x}^{(i)}\|_1\},
\end{equation}
where $\|\cdot\|_{TV}$ is the total variation norm, and $\bar{B}(\mathbf{x}, (1-z)/2K_0\tilde{n})$ is the $1$-norm closed ball centred on $\mathbf{x}$ with radius $(1-z)/2K_0\tilde{n}$.
\end{lemma}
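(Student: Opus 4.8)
The plan is to exploit that the two processes $\fastFrozen_t$ and $\fastProcessSeqCont_t$ run on the \emph{same} Poisson clock, so one can condition on it. Writing $\fastFrozenDiscLaw_k = \delta_{\mathbf{v}}\transProbs{\mathbf{x}}^{k}$ for the $k$-step frozen law, and denoting by $\mu_k^{\mathbf{x},\mathbf{v},l}=\delta_{\mathbf{v}}\transProbs{\mathbf{x}^{(1)}}\cdots\transProbs{\mathbf{x}^{(k)}}$ the $k$-step law of the inhomogeneous discrete chain, relation \eqref{eqn:condPoissonProcess} and its exact analogue for $\fastProcessSeqCont_t$ give, by the triangle inequality,
\[
\|\fastFrozenLaw_t - \fastProcessSeqContLaw_t\|_{\text{TV}} \le \sum_{k=0}^{\infty}\mathbb{P}[\tau_t^{\lambda}=k]\,\bigl\|\delta_{\mathbf{v}}\transProbs{\mathbf{x}}^{k} - \delta_{\mathbf{v}}\transProbs{\mathbf{x}^{(1)}}\cdots\transProbs{\mathbf{x}^{(k)}}\bigr\|_{\text{TV}}.
\]
Since $\sum_k\mathbb{P}[\tau_t^{\lambda}=k]=1$, it suffices to bound the $k$-step discrete quantity \emph{uniformly in $k$} by $C\sup_i\|\mathbf{x}-\mathbf{x}^{(i)}\|_1$ with $C$ independent of $k,\lambda,t,\mathbf{x},\mathbf{v},l$. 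I would then telescope,
\[
\transProbs{\mathbf{x}}^{k} - \transProbs{\mathbf{x}^{(1)}}\cdots\transProbs{\mathbf{x}^{(k)}} = \sum_{j=1}^{k} \transProbs{\mathbf{x}^{(1)}}\cdots\transProbs{\mathbf{x}^{(j-1)}}\bigl(\transProbs{\mathbf{x}} - \transProbs{\mathbf{x}^{(j)}}\bigr)\transProbs{\mathbf{x}}^{k-j}.
\]
The crude estimate $\|\transProbs{\mathbf{x}} - \transProbs{\mathbf{x}^{(j)}}\|\le K_0\|\mathbf{x}-\mathbf{x}^{(j)}\|_1$ (Assumption \ref{ass:p}\eqref{ass:pLip}) combined with the $\|\cdot\|_{\text{TV}}$-contractivity of stochastic matrices gives only a bound of order $k\sup_i\|\mathbf{x}-\mathbf{x}^{(i)}\|_1$, which after summation against the Poisson law produces the unacceptable factor $\lambda t$. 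Killing this factor is the crux of the proof.

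The mechanism that removes the $k$ is the absorbing-state structure. By Assumption \ref{ass:p}\eqref{ass:p:Abs} one has $\delta_{\absorbing^{(i)}}\bigl(\transProbs{\mathbf{x}} - \transProbs{\mathbf{x}^{(j)}}\bigr)=0$, so with $\nu_{j-1}\coloneqq\delta_{\mathbf{v}}\transProbs{\mathbf{x}^{(1)}}\cdots\transProbs{\mathbf{x}^{(j-1)}}$ only the part of $\nu_{j-1}$ supported on the transient set $\mathcal{T}=\fastDomain\setminus\{\absorbing^{(1)},\dots,\absorbing^{(\numAbs)}\}$ feeds into the $j$-th term, and, using contractivity of $\transProbs{\mathbf{x}}^{k-j}$ and Assumption \ref{ass:p}\eqref{ass:pLip},
\[
\bigl\|\nu_{j-1}\bigl(\transProbs{\mathbf{x}} - \transProbs{\mathbf{x}^{(j)}}\bigr)\transProbs{\mathbf{x}}^{k-j}\bigr\|_{\text{TV}} \le \nu_{j-1}(\mathcal{T})\,K_0\|\mathbf{x}-\mathbf{x}^{(j)}\|_1.
\]
Thus everything reduces to showing $\sum_{j\ge1}\nu_{j-1}(\mathcal{T})\le C<\infty$ with $C$ uniform over all admissible $l$. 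Here the ball constraint enters: over a block of $\tilde n$ consecutive steps, telescoping \emph{inside} the block and using $\mathbf{x}^{(i)}\in\bar{B}(\mathbf{x},(1-z_0)/2K_0\tilde n)$ yields $\|\transProbs{\mathbf{x}^{(i)}}\cdots\transProbs{\mathbf{x}^{(i+\tilde n-1)}} - \transProbs{\mathbf{x}}^{\tilde n}\|\le \tilde n K_0\cdot(1-z_0)/2K_0\tilde n = (1-z_0)/2$; combined with Assumption \ref{ass:p}\eqref{ass:pMinor} (a homogeneous block of length $\tilde n$ leaves the chain transient with probability at most $z_0$), a block of the inhomogeneous product leaves the chain transient with probability at most $z_1\coloneqq(1+z_0)/2<1$. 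Since absorbed mass never returns to $\mathcal{T}$, iterating over blocks gives $\nu_{j-1}(\mathcal{T})\le z_1^{\lfloor(j-1)/\tilde n\rfloor}$, hence $\sum_{j\ge1}\nu_{j-1}(\mathcal{T})\le \tilde n/(1-z_1)=2\tilde n/(1-z_0)$. Assembling the pieces proves the claim with $K=2K_0\tilde n/(1-z_0)$; note that Assumption \ref{ass:driftSlow} is not actually needed for this lemma (it enters only later).

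The main obstacle is precisely the uniformity in $k$: one must recognise that the perturbation $\transProbs{\mathbf{x}}-\transProbs{\mathbf{x}^{(j)}}$ is felt only while the chain is still in $\mathcal{T}$, and that the transient mass is depleted geometrically at a rate that is \emph{independent of which admissible sequence $l$ is used} — the radius $(1-z_0)/2K_0\tilde n$ in the statement is calibrated exactly so that each inhomogeneous block of length $\tilde n$ remains a strict contraction on $\mathcal{T}$. The rest is routine bookkeeping: the contractivity $\|\mu\transProbs{}\|_{\text{TV}}\le\|\mu\|_{\text{TV}}$ for signed measures $\mu$, the splitting of $\nu_{j-1}$ into its transient and absorbed parts, and the elementary fact that $\delta_{\mathbf{w}}\bigl(\transProbs{\mathbf{x}}-\transProbs{\mathbf{x}^{(j)}}\bigr)$ has $\|\cdot\|_{\text{TV}}$ at most $K_0\|\mathbf{x}-\mathbf{x}^{(j)}\|_1$ by Assumption \ref{ass:p}\eqref{ass:pLip}.
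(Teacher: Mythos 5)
Your proof is correct, and it rests on the same engine as the paper's -- telescoping the difference of the $k$-step transition operators and taming the resulting factor $k$ by the geometric loss of transient mass over blocks of length $\tilde n$, with the ball radius $(1-z_0)/2K_0\tilde n$ calibrated exactly so that each inhomogeneous block still contracts at rate $(1+z_0)/2$, followed by conditioning on the Poisson clock. The bookkeeping, however, is genuinely different. The paper reorders the states, writes $\fastProcessMat_{\mathbf{x}}$ in block form, and estimates the two nontrivial blocks of $\fastProcessMat_{\mathbf{x}}^k - \prod_i \fastProcessMat_{\mathbf{x}^{(k-i)}}$ separately in operator norm (the $\fastProcessMatNAb$ block via a bound of the form $k\,\tilde z^{\lfloor (k-1)/\tilde n\rfloor}\max_i\|\mathbf{x}-\mathbf{x}^{(i)}\|_1$, the $\fastProcessMatAb$ block via a geometric sum), whereas you telescope the full product once at the level of measures, observe that Assumption \ref{ass:p}\eqref{ass:p:Abs} makes the perturbation $\transProbs{\mathbf{x}}-\transProbs{\mathbf{x}^{(j)}}$ annihilate the absorbed part of $\nu_{j-1}$, and then sum the geometrically decaying transient masses $\nu_{j-1}(\mathcal{T})\le z_1^{\lfloor (j-1)/\tilde n\rfloor}$ directly. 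Your route avoids the block-matrix decomposition and the intermediate $k$-dependent estimate, yields the explicit constant $K=2K_0\tilde n/(1-z_0)$, and makes transparent where each hypothesis enters; the paper's matrix formulation has the advantage of setting up notation (the $\fastProcessMatAb$, $\fastProcessMatNAb$ blocks and the contraction \eqref{eqn:opMatBound}) that is reused verbatim in the surrounding arguments. Your remark that Assumption \ref{ass:driftSlow} is not actually needed for this lemma is also accurate -- it is invoked only later (e.g.\ in Lemma \ref{lemma:fastProcessConvX}), where the radius $\|a\|_\infty t$ of the reachable set enters.
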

\begin{lemma}\label{lemma:fastProcessConvX} Let Assumption \ref{ass:driftSlow} and  Assumption \ref{ass:p} hold.
    Then there exists $K>0$ independent of $\mathbf{x}$, $\mathbf{v}, \lambda$ and $t>0$ such that
\begin{equation}\label{eqn:convMeasX}
    \| \mu_{t}^{\lambda, \mathbf{x}, \mathbf{v}} - \LawLimitPosDep \|_{\text{TV}} \leq K\rate{t\lambda} + Kt,
\end{equation}
where $\LawLimitPosDep$ is as in \eqref{eqn:limMeasX}.
\end{lemma}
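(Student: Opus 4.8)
The plan is to sandwich the frozen process between the two measures. By the triangle inequality,
$$\| \mu_{t}^{\lambda, \mathbf{x}, \mathbf{v}} - \LawLimitPosDep \|_{\text{TV}} \;\le\; \| \mu_{t}^{\lambda, \mathbf{x}, \mathbf{v}} - \fastFrozenLaw_t \|_{\text{TV}} \;+\; \| \fastFrozenLaw_t - \LawLimitPosDep \|_{\text{TV}},$$
and the second summand is at most $K\rate{t\lambda}$ by Lemma \ref{lemma41}. So the statement reduces to $\| \mu_{t}^{\lambda, \mathbf{x}, \mathbf{v}} - \fastFrozenLaw_t \|_{\text{TV}} \le Kt$ with $K$ independent of $\mathbf{x},\mathbf{v},\lambda,t$. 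Since $\mu_{t}^{\lambda, \mathbf{x}, \mathbf{v}}$ and $\fastFrozenLaw_t$ are, respectively, the laws of the coupled fast process $\fastcoupled$ (run with its slow companion $\slowProcess{\lambda,\mathbf{x},\mathbf{v}}{t}$) and of the frozen process $\fastFrozen_t$, both started at $\mathbf{v}$, it suffices to exhibit a coupling of these two processes with $\mathbb{P}[\fastcoupled \neq \fastFrozen_t] \le K't$.

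First I would build the coupling: give both processes the same Poisson clock $\tau^{\lambda}$, with jump times $0<s_1<s_2<\dots$, and at each $s_j$ select the two transitions through a maximal coupling of the relevant one-step laws. As long as the processes have coincided through $s_{j-1}$ they occupy a common state $\mathbf{v}'\in\fastDomain$, and at $s_j$ they jump using $\transProbs{\slowProcess{\lambda,\mathbf{x},\mathbf{v}}{s_j}}(\mathbf{v}',\cdot)$ (coupled) and $\transProbs{\mathbf{x}}(\mathbf{v}',\cdot)$ (frozen); the maximal coupling makes them disagree with conditional probability $\tfrac12\sum_{\mathbf{v}''}|\transProbs{\slowProcess{\lambda,\mathbf{x},\mathbf{v}}{s_j}}(\mathbf{v}',\mathbf{v}'') - \transProbs{\mathbf{x}}(\mathbf{v}',\mathbf{v}'')| \le \tfrac12 K_0\|\slowProcess{\lambda,\mathbf{x},\mathbf{v}}{s_j}-\mathbf{x}\|_1$, using Assumption \ref{ass:p}\eqref{ass:pLip} (and $X$ is continuous, so $\slowProcess{\lambda,\mathbf{x},\mathbf{v}}{s_j}$ is $\mathcal{F}_{s_j^-}$-measurable). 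By Assumption \ref{ass:driftSlow} and finiteness of $\fastDomain$, $C\coloneqq\sup_{\mathbf{z},\mathbf{v}}\|a(\mathbf{z},\mathbf{v})\|_1<\infty$, so for $s_j\le t$ we get $\|\slowProcess{\lambda,\mathbf{x},\mathbf{v}}{s_j}-\mathbf{x}\|_1\le\int_0^{s_j}\|a\|_1\,du\le Ct$; hence each jump at which the processes still agree contributes at most $\tfrac12 K_0Ct$ to the separation probability.

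The essential point — and the step I expect to be the main obstacle — is obtaining a bound that does \emph{not} grow with $\lambda$, even though the clock fires $\sim\lambda t$ times on $[0,t]$: the naive estimate (number of jumps)$\times$(per-jump error)$\sim\lambda t\cdot t$ is useless. One resolves this by noting that only jumps occurring before the frozen process reaches an absorbing state can cause a disagreement: once the common state is some $\absorbing^{(i)}$, both kernels fix it (Assumption \ref{ass:p}\eqref{ass:p:Abs}) and nothing moves again. Thus the number of "dangerous" jumps is dominated by $M$, the number of jumps the frozen process makes before absorption, and Assumption \ref{ass:p}\eqref{ass:pMinor}, applied iteratively via the Markov property and uniformly in $\mathbf{x},\mathbf{v}$, gives $\mathbb{P}[M>k\tilde n]\le z_0^{\,k}$, so $\mathbb{E}[M]\le \tilde n/(1-z_0)$, a constant independent of $\lambda$. (Alternatively, Lemma \ref{lemma41} bounds the expected number of pre-absorption jumps by $\lambda\int_0^{\infty}K\rate{s\lambda}\,ds=K/c_1$.) A conditional union bound over the dangerous jumps then gives
$$\mathbb{P}\big[\fastcoupled\neq\fastFrozen_t\big]\;\le\;\tfrac12 K_0Ct\,\mathbb{E}[M]\;\le\;\frac{K_0C\tilde n}{2(1-z_0)}\,t,$$
and since $\|\mu_{t}^{\lambda, \mathbf{x}, \mathbf{v}}-\fastFrozenLaw_t\|_{\text{TV}}\le 2\,\mathbb{P}[\fastcoupled\neq\fastFrozen_t]$ (with the paper's normalisation of $\|\cdot\|_{\text{TV}}$), combining with Lemma \ref{lemma41} and relabelling constants yields \eqref{eqn:convMeasX}.

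Two side remarks. One could instead try to route through Lemma \ref{lemma:fastProcessConv1}, conditioning on the realised slow trajectory and feeding its values at the jump times as the sequence $l$; but that sequence is entangled with the very fast transitions it is supposed to generate, so the direct coupling above seems cleaner. The remaining points needing care are measurability (the indicator "frozen process not yet absorbed" is left-continuous along the clock, hence predictable, which legitimises the conditional union bound over the random jump times) and checking that $C$, $K_0$, $\tilde n$, $z_0$ are all uniform in $\mathbf{x},\mathbf{v}$ — which is exactly what Assumptions \ref{ass:driftSlow} and \ref{ass:p} guarantee.
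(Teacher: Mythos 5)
Your proof is correct, but for the key term it takes a genuinely different route from the paper. Both arguments start from the same triangle inequality and dispose of $\| \fastFrozenLaw_t - \LawLimitPosDep \|_{\text{TV}}$ via Lemma \ref{lemma41}; the difference is in how $\| \mu_{t}^{\lambda, \mathbf{x}, \mathbf{v}} - \fastFrozenLaw_t \|_{\text{TV}}\le Kt$ is obtained. The paper routes exactly through the alternative you set aside: it bounds this distance by $\sup_{l\in\seqSpace}\| \fastFrozenLaw_t - \fastProcessSeqContLaw_t \|_{\text{TV}}$, where the supremum runs over sequences confined to the ball $\bar B(\mathbf{x},\|a\|_\infty t)$ reachable by the slow variable, and then invokes Lemma \ref{lemma:fastProcessConv1}, whose proof is a deterministic perturbation estimate on products of transition matrices: using the block structure of $\fastProcessMat_{\mathbf{x}}$, the contraction $\|(\fastProcessMatNAb_{\mathbf{x}})^{\tilde n}\|_{op}\le z_0$ from Assumption \ref{ass:p}\eqref{ass:pMinor} and the Lipschitz bound \eqref{ass:pLip}, it shows $\|\fastProcessMat_{\mathbf{x}}^k-\prod_i\fastProcessMat_{\mathbf{x}^{(k-i)}}\|_{op}\le C\max_i\|\mathbf{x}-\mathbf{x}^{(i)}\|_1$ uniformly in $k$, the geometric factor playing exactly the role your $\mathbb{E}[M]\le\tilde n/(1-z_0)$ plays in killing the $\lambda t$ jump count. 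Your jump-by-jump maximal coupling on a shared Poisson clock, with the observation that only pre-absorption jumps are dangerous (Assumption \ref{ass:p}\eqref{ass:p:Abs} holding for every $\mathbf{x}$ is what makes absorption permanent under both kernels), reaches the same $O(t)$ bound with the same three hypotheses. What each buys: your coupling works directly with the true (random) slow trajectory, so it sidesteps the step in the paper where the random positions at jump times are replaced by a supremum over deterministic sequences (the entanglement you flag — the paper asserts this step with only a brief justification) and it does not need the radius restriction $(1-z_0)/2K_0\tilde n$ appearing in the statement of Lemma \ref{lemma:fastProcessConv1}; the paper's route, on the other hand, isolates a reusable quantitative lemma about the sequence-indexed process $\fastProcessSeqCont_t$, which the authors present as the main structural trick of the fully coupled analysis. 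Your measurability and uniformity caveats are handled correctly, so I see no gap.
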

\begin{Note}
    Equation \eqref{eqn:convMeasX} does not state 
\begin{equation}\label{eqn:convMeasXnote}
    \| \mu_{t}^{\lambda, \mathbf{x}, \mathbf{v}} - \LawLimitPosDep \|_{\text{TV}}  \rightarrow 0,
\end{equation}
 for fixed $\lambda>0$, as $t \rightarrow \infty$, as was the case in the decoupled regime (see \eqref{eqn:convMeas}). Because of the coupling, in general as $t\rightarrow\infty$, 
$\mu_{t}^{\lambda, \mathbf{x}, \mathbf{v}}$ converges to something other than $\LawLimitPosDep$. In other words, it need not be the case that $\LawLimitPosDep = \lim_{t\rightarrow\infty} \mu_{t}^{\lambda, \mathbf{x}, \mathbf{v}}$.

Instead, one would expect that \eqref{eqn:convMeasXnote} might hold for fixed $t>0$, as $\lambda \rightarrow 0$. This is not implied by \eqref{eqn:convMeasX}, so \eqref{eqn:convMeasX} may not be sharp. While we don't need to show this convergence, we point out that the focus of the proof of Theorem \ref{thm:averaging1} is on working with times $t$ of the order $1/\sqrt{\lambda}$ (see the proof of Theorem \ref{thm:averaging1}), and for these times \eqref{eqn:convMeasX} implies \eqref{eqn:convMeasXnote} for fixed $t>0$, as $\lambda \rightarrow 0$.
\end{Note}

Now we can define the averaged process:
\begin{equation}\label{eqn:avgedProcessX}
    d\bar{X}^{\mathbf{x}, \mathbf{v}}_t = a(\bar{X}^{\mathbf{x}, \mathbf{v}}_t,\zeta^{\mathbf{x},\mathbf{v}})  dt, \quad \bar{X}^{\mathbf{x},\mathbf{v}}_0 = \mathbf{x}.
\end{equation}
where $\zeta^{\mathbf{x},\mathbf{v}}$ is a discrete random variable taking values $\absorbing^{(1)}, \ldots, \absorbing^{(L)}$, such that
    $\zeta^{\mathbf{x},\mathbf{v}} =
    \absorbing^{(i)} \text{ with probability } q^i(\mathbf{x},\mathbf{v})$.
We can write down the expectation of the above process explicitly:
\begin{equation}\label{eqn:avgedSGX}
    \mathbb{E}\left[f(\bar{X}^{\mathbf{x}, \mathbf{v}}_t)\right] = \sum_{i=1}^\numAbs q^i(\mathbf{x},\mathbf{v})\avgSol_t^i f(\mathbf{x}),
\end{equation}
where $\avgSol_t^i f(\mathbf{x})\coloneqq f(y^i(t))$ where $y^i(t)$ is the unique solution of $\frac{d y^i}{d t}=a(y^i(t),\absorbing^{(i)})$ with initial condition $y^i(0) = \mathbf{x}$. Note that formula \eqref{eqn:avgedSG} in Section \ref{sec:non-coupled} is a particular instance of \eqref{eqn:avgedSGX}, and indeed in the setting of Section \ref{sec:non-coupled} one has $\avgSol_t^1 f(\mathbf{x}) = f(\mathbf{x} - t\absorbing)$ and $\avgSol_t^2 f(\mathbf{x}) = f(\mathbf{x} + t\absorbing)$.
\begin{theorem}\label{thm:averaging1}
Let Assumption \ref{ass:driftSlow} hold and let 
  $X^{\lambda,\mathbf{x}, \mathbf{v}}_t$ be as in \eqref{eqn:fullycoupled}, where we assume that the fast process $\fastcoupled$ in \eqref{eqn:fullycoupled} is such that Assumption \ref{ass:p} holds (that is, in short, we are in the setting in which the frozen process has $L$ absorbing states). Let $\bar{X}^{\mathbf{x}, \mathbf{v}}_t$ as in \eqref{eqn:avgedProcessX}. Then there exists $K>0$, possibly dependent on $t$, such that
		\begin{equation*}\label{eqn:conclnonfullycoupledX}
			\left| \mathbb{E}[f(X^{\lambda,\mathbf{x}, \mathbf{v}}_t)] - \mathbb{E}\left[f(\bar{X}^{\mathbf{x}, \mathbf{v}}_t)\right] \right| \leq K\|f\| \frac{1}{\sqrt{\lambda}} , \quad \text{for all $\mathbf{x} \in \R^N$, $\mathbf{v} \in \fastDomain$}
		\end{equation*}
  for all $f\colon \R^N \rightarrow \mathbb{R}$  such that $f \in C^1_b(\R^N)$, 
		where we recall the notation $\|f\| \coloneqq \|f\|_\infty + \sum_{i} \|\partial_{x_i} f\|_{\infty}$.
\end{theorem}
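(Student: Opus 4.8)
\textbf{Proof plan for Theorem \ref{thm:averaging1}.}

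The plan is to mimic the conditioning argument of Proposition \ref{thm:averaging}, but with the frozen process (and the auxiliary sequence process $\fastProcessSeqCont_t$) replacing the explicit Markov chain of the toy example. Fix $t>0$ and set $t_0 \coloneqq 1/\sqrt{\lambda}$, so that $t_0 \to 0$ while $t_0 \lambda \to \infty$ as $\lambda \to \infty$. First I would condition on the value of the fast process at the intermediate time $t_0$: writing $\E[f(\slowProcess{\lambda,\bx,\bv}{t})]$ as a sum over the events $\{\fast_{t_0} = \absorbing^{(i)}\}$, $i=1,\dots,\numAbs$, together with the event $\{\fast_{t_0} \notin \{\absorbing^{(1)},\dots,\absorbing^{(\numAbs)}\}\}$. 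On the last event I use Lemma \ref{lemma:fastProcessConvX} (applied with $t = t_0$), whose bound $K\rate{t_0\lambda} + Kt_0$ controls the total mass of being non-absorbed at time $t_0$ by $O(\rate{\sqrt\lambda}) + O(1/\sqrt\lambda)$; multiplied by $\|f\|_\infty$ this contributes an $O(\|f\|/\sqrt\lambda)$ error, exactly as term $C$ did in the toy proof.

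On the event $\{\fast_{t_0} = \absorbing^{(i)}\}$, I would exploit Assumption \ref{ass:p}\eqref{ass:p:Abs}: once the coupled fast process reaches an absorbing state $\absorbing^{(i)}$, it stays there for all later times (the slow variable cannot move it out), so for $s \geq t_0$ the slow dynamics is the \emph{deterministic} ODE $\dot{X} = a(X,\absorbing^{(i)})$ started from the (random) point $\slowProcess{\lambda,\bx,\bv}{t_0}$. Hence, using the tower property exactly as in \eqref{eqn:towerProp}--\eqref{eqn:condBoth}, the conditional expectation of $f(\slowProcess{\lambda,\bx,\bv}{t})$ equals $\E[\avgSol^i_{t-t_0} f(\slowProcess{\lambda,\bx,\bv}{t_0}) \mid \fast_{t_0} = \absorbing^{(i)}]$. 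Since $\|\slowProcess{\lambda,\bx,\bv}{t_0} - \bx\|_2 \leq t_0 \sup\|a\|$ (by Assumption \ref{ass:driftSlow}), and the flow map $\avgSol^i$ is Lipschitz on bounded time intervals with a Gronwall constant depending on the Lipschitz-in-space behaviour of $a(\cdot,\absorbing^{(i)})$ — here I only need $a(\cdot,\absorbing^{(i)})$ bounded and the flow continuous, but to get the clean $1/\sqrt\lambda$ rate it is cleanest to compare $\avgSol^i_{t-t_0}f(\slowProcess{\lambda,\bx,\bv}{t_0})$ with $\avgSol^i_{t}f(\bx)$ and absorb the discrepancy into $K\|f\| t_0 = K\|f\|/\sqrt\lambda$ — I obtain that this term differs from $q^i(\bx,\bv)\, \avgSol^i_t f(\bx)$ by at most $K\|f\|/\sqrt\lambda$ plus $\|f\|_\infty \, |\,\Prob[\fast_{t_0} = \absorbing^{(i)}] - q^i(\bx,\bv)\,|$. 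Summing \eqref{eqn:avgedSGX} over $i$ and invoking Lemma \ref{lemma:fastProcessConvX} once more to bound $|\Prob[\fast_{t_0} = \absorbing^{(i)}] - q^i(\bx,\bv)|$ by $K\rate{t_0\lambda} + Kt_0 = O(1/\sqrt\lambda)$ finishes the estimate.

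The step I expect to be the genuine obstacle — and the reason $\fastProcessSeqCont_t$ and Lemma \ref{lemma:fastProcessConv1} are introduced — is controlling $\Prob[\fast_{t_0} = \absorbing^{(i)}]$, i.e. the \emph{marginal} law of the \emph{coupled} fast process at time $t_0$, rather than the law of the frozen process, for which $q^i$ is defined. In the decoupled toy example this was immediate because the fast process did not see the slow one; in the coupled case the transition matrix used at the $k$-th jump is $\transProbs{\slowProcess{\lambda,\bx,\bv}{\tau_k}}$, which drifts away from $\transProbs{\bx}$. This is precisely what Lemma \ref{lemma:fastProcessConvX} handles: over a time window of length $t_0$, the slow variable moves at most $O(t_0)$, so by the Lipschitz Assumption \ref{ass:p}\eqref{ass:pLip} the relevant transition matrices are $O(t_0)$-close to $\transProbs{\bx}$, and coupling the trajectory of $\fast$ with that of $\fastFrozen$ (through $\fastProcessSeqCont$) yields the $Kt$ term in \eqref{eqn:convMeasX}. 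Once Lemma \ref{lemma:fastProcessConvX} is in hand, everything else is the bookkeeping above, and the final bound is the sum of the $C$-type term, the flow-Lipschitz term, and the absorption-probability term, each $O(\|f\|/\sqrt\lambda)$ (the $\rate{\sqrt\lambda}$ contributions being subsumed, as in \eqref{eqn:conclnonfullycoupled}, into the $1/\sqrt\lambda$ rate).
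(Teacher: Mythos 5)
Your proposal is correct and follows essentially the same route as the paper's proof: conditioning at the intermediate time $t_0=1/\sqrt{\lambda}$, using Assumption \ref{ass:p}\eqref{ass:p:Abs} to replace the conditional expectation on the absorbed event by the deterministic flow $\avgSol^i_{t-t_0}f(X^{\lambda,\bx,\bv}_{t_0})$, and controlling both the non-absorbed mass and $|\mathbb{P}[V^{\lambda,\bx,\bv}_{t_0}=\absorbing^{(i)}]-q^i(\bx,\bv)|$ via Lemma \ref{lemma:fastProcessConvX} (itself resting on Lemma \ref{lemma:fastProcessConv1}), with the $O(t_0)$ displacement of the slow variable absorbed into the $K\|f\|/\sqrt{\lambda}$ rate. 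No substantive differences from the argument in Section \ref{sec:absStates}.
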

Before proving Lemma \ref{lemma:fastProcessConv1}, Lemma \ref{lemma:fastProcessConvX}, and Theorem \ref{thm:averaging1}, we outline some preliminaries. We begin by pointing out the structure of the transition matrices. In the following we denote by $\transProbsT{\mathbf{x}}$ the transpose of the transition matrix $\transProbs{\mathbf{x}}$, i.e $\transProbsT{\mathbf{x}} \coloneqq \transProbs{\mathbf{x}}^T$. Hence if $\mu_0$ is the initial law of $\fastFrozen_k$, then $\mu_1 = \transProbsT{\mathbf{x}}\mu_0$ is the law of $\fastFrozen_1$. By possibly `reordering' the states in $\fastDomain$ (so that the first $\numAbs$ coincide with the absorbing states) we can write
\begin{equation*}
      \fastProcessMat_{\mathbf{x}} = \left[ {\begin{array}{cc}
    \mathbb{I}_\numAbs & \fastProcessMatAb_{\mathbf{x}} \\
   \mathbf{0}   & \fastProcessMatNAb_{\mathbf{x}}\\
  \end{array} } \right].
\end{equation*}
That is, with this `reordering' the matrix $\fastProcessMat_{\mathbf{x}}$ takes a block structure where $\mathbb{I}_\numAbs \in [0,1]^{\numAbs \times \numAbs}$ is the $\numAbs \times \numAbs$ identity matrix, $\mathbf{0} \in [0,1]^{(2^N - \numAbs) \times \numAbs}$ is the zero matrix, $\fastProcessMatAb_{\mathbf{x}} \in [0,1]^{\numAbs\times (2^N-\numAbs)}$ contains the probabilities of moving from a non-absorbing state to an absorbing state and $\fastProcessMatNAb_{\mathbf{x}} \in [0,1]^{(2^N-\numAbs) \times(2^N-\numAbs)}$ contains the probabilities of moving from a non-absorbing state to a non-absorbing state. Hence,
\begin{equation}\label{eqn:transMatPower}
      \fastProcessMat_{\mathbf{x}}^k = \left[ {\begin{array}{cc}
    \mathbb{I}_\numAbs & \sum_{i=0}^{k-1}\fastProcessMatAb_{\mathbf{x}}\left(\fastProcessMatNAb_{\mathbf{x}}\right)^i\\
    \mathbf{0}   & \left(\fastProcessMatNAb_{\mathbf{x}}\right)^k\\
  \end{array} } \right].
\end{equation}
Note that this is simply the transpose of the transition matrix of the process where $k$ jumps are done in a row. In a similar way, given a sequence $l$, we can write the transpose of the transition matrix of the process $\fastProcessSeq_k$ (which we define to be the discrete-time version of $\fastProcessSeqCont_t$) up to $k$ jumps
\begin{equation}\label{eqn:transMatProd}
      \fastProcessMat_{\mathbf{x}^{(k)}} \cdots \fastProcessMat_{\mathbf{x}^{(1)}} =\prod_{i=0}^{k-1}\fastProcessMat_{\mathbf{x}^{(k-i)}} = \left[ {\begin{array}{cc}
    \mathbb{I}_\numAbs & \sum_{i=0}^{k-1}\fastProcessMatAb_{\mathbf{x}^{(i+1)}}\prod_{j=0}^{i-1}\fastProcessMatNAb_{\mathbf{x}^{(i-j)}}  \\
  \mathbf{0}   & \prod_{i=0}^{k-1}\fastProcessMatNAb_{\mathbf{x}^{(k-i)}}\\
  \end{array} } \right],
\end{equation}
where we use the convention that a product over an empty set is the identity, to ease notation. Using \eqref{eqn:transMatPower} and \eqref{eqn:transMatProd} we have
\begin{equation}\label{eqn:BlockMatIdentity}
\begin{split}
    &\fastProcessMat_{\mathbf{x}}^k - \prod_{i=0}^{k-1}\fastProcessMat_{\mathbf{x}^{(k-i)}} \\ & =\left[ {\begin{array}{cc}
   \mathbf{0}  & \sum_{i=0}^{k-1}\left(\fastProcessMatAb_{\mathbf{x}}\left(\fastProcessMatNAb_{\mathbf{x}}\right)^i - \fastProcessMatAb_{\mathbf{x}^{(i+1)}}\prod_{j=0}^{i-1}\fastProcessMatNAb_{\mathbf{x}^{(i-j)}}\right) \\\mathbf{0} 
     & \left(\fastProcessMatNAb_{\mathbf{x}}\right)^k - \prod_{i=0}^{k-1}\fastProcessMatNAb_{\mathbf{x}^{(k-i)}}\\
  \end{array} } \right].
\end{split}
\end{equation}
   Let $\|\cdot\|_{op}$ be the operator norm of a matrix, namely:
    \begin{equation*}
        \|A\|_{op} = \sup_{\|u\|_1=1}\|Au\|_1.
    \end{equation*}
 By Assumption \ref{ass:p} \eqref{ass:pMinor}
    \begin{equation}\label{eqn:opMatBound}
        \|\left(\fastProcessMatNAb_{\mathbf{x}}\right)^{\tilde{n}}\|_{op} \leq z_0 < 1.
        \end{equation}
        More precisely, 
        \begin{equation*}
            \|\left(\fastProcessMatNAb_{\mathbf{x}}\right)^{\tilde{n}}\|_{op} = \max_{\mathbf{v} \notin \{\absorbing^{(1)}, \ldots,\absorbing^{(\numAbs)}\}}\mathbb{P}[\fastFrozenDisc_{\tilde{n}} \notin \{\absorbing^{(1)}, \ldots,\absorbing^{(\numAbs)}\}]\leq z_0 < 1.
        \end{equation*}
        We also define
        \begin{equation}\label{eqn:opMatBound1}
    C^{Ab} \coloneqq \sup_{\mathbf{x}}\|\fastProcessMatAb_{\mathbf{x}}\|_{op}.
    \end{equation}
We also have, by Assumption \ref{ass:p} \eqref{ass:pLip} that for all $\mathbf{x},\mathbf{x'} \in \R^N$,
    \begin{equation}\label{eqn:opMatBoundDiff}
        \|\fastProcessMatNAb_{\mathbf{x}}- \fastProcessMatNAb_{\mathbf{x'}}\|_{op} + \|\fastProcessMatAb_{\mathbf{x}}- \fastProcessMatAb_{\mathbf{x'}}\|_{op}\leq K_0\|\mathbf{x}-\mathbf{x'}\|_1.
    \end{equation}

\begin{proof}[Proof of Lemma \ref{lemma:fastProcessConv1}]

    First we consider the matrix \eqref{eqn:BlockMatIdentity}. We wish to bound the operator norm of such a matrix.
    We can write the difference in the bottom right hand block of \eqref{eqn:BlockMatIdentity} as
\begin{equation}\label{eqn:MatIdentity}
    \left(\fastProcessMatNAb_{\mathbf{x}}\right)^k - \prod_{i=0}^{k-1}\fastProcessMatNAb_{\mathbf{x}^{(k-i)}} = \sum_{j=0}^{k-1}\left(\prod_{i=0}^{j-1}\fastProcessMatNAb_{\mathbf{x}^{(k-i)}}\right)\left(\fastProcessMatNAb_{\mathbf{x}}-\fastProcessMatNAb_{\mathbf{x}^{(k-j)}} \right)\left(\fastProcessMatNAb_{\mathbf{x}} \right)^{k-1-j}.
\end{equation}
Using \eqref{eqn:opMatBound}, \eqref{eqn:opMatBoundDiff} and \eqref{eqn:MatIdentity} we then have \begin{equation*}
     \Big\|\left(\fastProcessMatNAb_{\mathbf{x}}\right)^{\tilde{n}} - \prod_{i=0}^{\tilde{n}-1}\fastProcessMatNAb_{\mathbf{x}^{(\tilde{n}-i)}}\Big\|_{op} \leq K_0 \tilde{n} \|\mathbf{x}-\mathbf{x}^{(i)}\|_1.
\end{equation*}
Hence, using \eqref{eqn:opMatBound} yields, for all subsequences $\{\mathbf{x}^{(n_i)}\}^{\infty}_{i=0}$ of $l$
\begin{equation*}
    \|\prod_{i=0}^{\tilde{n}-1}\fastProcessMatNAb_{\mathbf{x}^{(\tilde{n}-n_i)}}\|_{op} \leq z_0 +  K_0 \tilde{n} \max_{1\leq i\leq k} \|\mathbf{x}-\mathbf{x}^{(n_i)}\|_1 \leq \frac{z_0+1}{2} \eqqcolon \tilde{z} <1.
\end{equation*}
Using \eqref{eqn:opMatBound}, \eqref{eqn:opMatBoundDiff} and \eqref{eqn:MatIdentity} we have
\begin{equation}\label{eqn:diffBoundPowProd}\begin{split}
    \Big\|\left(\fastProcessMatNAb_{\mathbf{x}}\right)^k - \prod_{i=0}^{k-1}\fastProcessMatNAb_{\mathbf{x}^{(k-i)}}\Big\|_{op} &\leq C\sum_{j=0}^{k-1}\tilde{z}^{\lfloor \frac{k-1}{\tilde{n}}\rfloor}\|\mathbf{x}-\mathbf{x}^{(k-j)}\|_1 \\
    &\leq \tilde{C}\cdot k \cdot \tilde{z}^{\lfloor \frac{k-1}{\tilde{n}}\rfloor}\max_{1\leq i\leq k} \|\mathbf{x}-\mathbf{x}^{(i)}\|_1
\end{split}
\end{equation}
for some $\tilde{C}>0$ independent of $k$ and $\mathbf{x}$. Now we consider the upper right block:
\begin{equation}\label{eqn:bottLeftDiff}
    \begin{split}
        \sum_{i=0}^{k-1}&\left(\fastProcessMatAb_{\mathbf{x}}\left(\fastProcessMatNAb_{\mathbf{x}}\right)^i - \fastProcessMatAb_{\mathbf{x}^{(i+1)}}\prod_{j=0}^{i-1}\fastProcessMatNAb_{\mathbf{x}^{(i-j)}}\right) \\
        &\leq \sum_{i=0}^{k-1}\left( \fastProcessMatAb_{\mathbf{x}}
- \fastProcessMatAb_{\mathbf{x}^{(i+1)}} \right)\left(\fastProcessMatNAb_{\mathbf{x}}\right)^i
+\sum_{i=0}^{k-1}\fastProcessMatAb_{\mathbf{x}^{(i+1)}} \left(\left(\fastProcessMatNAb_{\mathbf{x}}\right)^i-\prod_{j=0}^{i-1}\fastProcessMatNAb_{\mathbf{x}^{(i-j)}}\right)
    \end{split}
\end{equation}
Using \eqref{eqn:opMatBound}, \eqref{eqn:opMatBound1}, \eqref{eqn:diffBoundPowProd} and \eqref{eqn:bottLeftDiff}
\begin{equation}\label{eqn:bottLeftDiffBound}
    \begin{split}
        \Big\|\sum_{i=0}^{k-1}&\left(\fastProcessMatAb_{\mathbf{x}}\left(\fastProcessMatNAb_{\mathbf{x}}\right)^i - \fastProcessMatAb_{\mathbf{x}^{(i+1)}}\prod_{j=0}^{i-1}\fastProcessMatNAb_{\mathbf{x}^{(i-j)}}\right)\Big\|_{op} \\
        &\leq C\sum_{i=0}^{k-1} z_0^{\lfloor \frac{i}{\tilde{n}}\rfloor} \|\mathbf{x}-\mathbf{x}^{(i+1)}\|_1 + \tilde{C}\cdot C^{Ab}\sum_{i=0}^{k-1}i \cdot \tilde{z}^{\lfloor \frac{i-1}{\tilde{n}}\rfloor}\max_{1\leq j\leq i} \|\mathbf{x}-\mathbf{x}_{(j)}\|_1 \\
        &\leq C \max_{1\leq i\leq k} \|\mathbf{x}-\mathbf{x}^{(i)}\|_1
    \end{split}
\end{equation}
for some $C>0$ independent of $k$. Hence, by \eqref{eqn:BlockMatIdentity}, \eqref{eqn:diffBoundPowProd} and \eqref{eqn:bottLeftDiffBound}
\begin{equation}\label{eqn:matBound}
    \|\fastProcessMat_{\mathbf{x}}^k - \prod_{i=0}^{k-1}\fastProcessMat_{\mathbf{x}^{(k-i)}}\|_{op} \leq C\max_{1\leq i\leq k} \|\mathbf{x}-\mathbf{x}^{(i)}\|_1.
\end{equation}
Now \eqref{eqn:convMeasXinter} follows from \eqref{eqn:matBound} by writing (where we denote by $\fastProcessSeqLaw_k$ the law of $\fastProcessSeq_k$, the discrete version of $\fastProcessSeqCont_t$)
\begin{equation*}
\begin{split}
         \| \fastFrozenLaw_t - \fastProcessSeqContLaw_t \|_{\text{TV}} &= \| \sum_{k=0}^{\infty} \mathbb{P} \left[\tau^\lambda(t) = k\right](\fastFrozenDiscLaw_k - \fastProcessSeqLaw_k) \|_{\text{TV}} \\
         &\leq  \sum_{k=0}^{\infty} \mathbb{P} \left[\tau^\lambda(t) = k\right]\|\fastFrozenDiscLaw_k - \fastProcessSeqLaw_k \|_{TV} \\
         &\leq \sum_{k=0}^{\infty} \mathbb{P} \left[\tau^\lambda(t) = k\right] \|\fastProcessMat_{\mathbf{x}}^k - \prod_{i=0}^{k-1}\fastProcessMat_{\mathbf{x}^{(k-i)}}\|_{op} \\
         &\leq C \sup_{ i} \|\mathbf{x}-\mathbf{x}^{(i)}\|_1.
\end{split}
\end{equation*}
\end{proof}

\begin{proof}[Proof of Lemma \ref{lemma:fastProcessConvX}]
Firstly,
\begin{equation}\label{eqn:triangleIneq}
    \| \mu_{t}^{\lambda, \mathbf{x}, \mathbf{v}} - \LawLimitPosDep \|_{\text{TV}} \leq \| \mu_{t}^{\lambda, \mathbf{x}, \mathbf{v}} - \fastFrozenLaw_t \|_{\text{TV}} + \| \fastFrozenLaw_t - \LawLimitPosDep \|_{\text{TV}}.
\end{equation}
From \eqref{eqn:convMeasXinter} we have
    \begin{equation}\label{eqn:firstTermTriangle}
    \begin{split}
         \| \mu_{t}^{\lambda, \mathbf{x}, \mathbf{v}} - \fastFrozenLaw_t \|_{\text{TV}} &\leq \sup_{l\in \seqSpace} \| \fastFrozenLaw_t - \fastProcessSeqContLaw_t \|_{\text{TV}} \\
         &\leq C \sup_{\mathbf{x'} \in \bar{B}(\mathbf{x},\|a\|_{\infty} t)} \|\mathbf{x}-\mathbf{x'}\|_1 \\
         &\leq Ct,
    \end{split}
    \end{equation} where $\seqSpace$ is the space of sequences $(\mathbf{x}^{(k)})^\infty_{k=1}$ with terms $\mathbf{x}^{(k)} \in \bar{B}(\mathbf{x},\|a\|_{\infty} t)$ and $\bar{B}(\mathbf{x},\|a\|_{\infty} t)$ is the $1$-norm closed ball centred on $\mathbf{x}$ with radius $\|a\|_{\infty} t$. To see the first step of this inequality notice that at time $t$ the slow dynamics is contained in $\bar{B}(\mathbf{x},\|a\|_{\infty} t)$, so all terms of the sequence of positions that the Poisson clock goes off at are contained in $\bar{B}(\mathbf{x},\|a\|_{\infty} t)$.
    Finally, \eqref{eqn:convMeasX} follows from 
    \eqref{eqn:convMeasXX}, \eqref{eqn:triangleIneq} and \eqref{eqn:firstTermTriangle}.
\end{proof}

\begin{proof}[Proof of Theorem \ref{thm:averaging1}.]
     The proof is analogous in spirit to the proof of Proposition \ref{thm:averaging}. We begin by defining $t_0 \coloneqq \frac{1}{\sqrt{\lambda}}$.
    Instead of \eqref{eqn:decomp}, using \eqref{eqn:avgedSGX} we have
    \begin{equation*}\label{eqn:conditioningX}
        \begin{split}
            &\left|\mathbb{E}\big[f\big(X^{\lambda,\mathbf{x}, \mathbf{v}}_t \big) \big]-  \mathbb{E}\left[f(\bar{X}^{\mathbf{x},\mathbf{v}}_t)\right]\right|\\ &\leq \sum^{\numAbs}_{i=1}|\mathbb{P}[ V^{\lambda,\mathbf{x}, \mathbf{v}}_{t_0} =\absorbing^{(i)}]\mathbb{E}\big[f\big(X^{\lambda,\mathbf{x}, \mathbf{v}}_t \big) | V^{\lambda,\mathbf{x}, \mathbf{v}}_{t_0} = \absorbing^{(i)}\big] - q^i(\mathbf{v})\avgSol_t^i f(\mathbf{x})| \\
            &+ \mathbb{P}[ V^{\lambda,\mathbf{x}, \mathbf{v}}_{t_0} \notin \{\absorbing^{(1)},\ldots, \absorbing^{(\numAbs)}\}]|\mathbb{E}\big[f\big(X^{\lambda,\mathbf{x}, \mathbf{v}}_t \big) | V^{\lambda,\mathbf{x}, \mathbf{v}}_{t_0} \notin \{\absorbing^{(1)},\ldots, \absorbing^{(\numAbs)}\}\big]|.
        \end{split}
    \end{equation*}
    Now, instead of \eqref{eqn:1}-\eqref{eqn:3} we proceed by proving the existence of a constant $K>0$ such that the following two bounds hold
    \begin{enumerate}
\item For every $1\leq i \leq \numAbs$
\begin{equation}\label{eqn:1X}
    |A_i - q^i(\mathbf{x},\mathbf{v})\avgSol_t^i f(\mathbf{x})| \leq K \|f\|\frac{1}{\sqrt{\lambda}},
\end{equation}
where $A_i =\mathbb{P}[ V^{\lambda,\mathbf{x}, \mathbf{v}}_{t_0} =\absorbing^{(i)}]\mathbb{E}\big[f\big(X^{\lambda,\mathbf{x}, \mathbf{v}}_t \big) | V^{\lambda,\mathbf{x}, \mathbf{v}}_{t_0} = \absorbing^{(i)}\big]$.
\item We have
\begin{equation}\label{eqn:3X}
      |C| \leq K \frac{1}{\sqrt{\lambda}}\|f\|,
\end{equation}
where $C\coloneqq \mathbb{P}[ V^{\lambda,\mathbf{x}, \mathbf{v}}_{t_0} \notin \{\absorbing^{(1)},\ldots, \absorbing^{(\numAbs)}\}]\mathbb{E}\big[f\big(X^{\lambda,\mathbf{x}, \mathbf{v}}_t \big) | V^{\lambda,\mathbf{x}, \mathbf{v}}_{t_0} \notin \{\absorbing^{(1)},\ldots, \absorbing^{(\numAbs)}\}\big]$
\end{enumerate}
Equation \eqref{eqn:1X} is shown by first writing \begin{equation}\label{eqn:ergRewriteExp}
    \mathbb{E}\big[f\big(X^{\lambda,\mathbf{x}, \mathbf{v}}_t \big) | V^{\lambda,\mathbf{x}, \mathbf{v}}_{t_0} = \absorbing^{(i)}\big] = \mathbb{E}[\avgSol_{t-t_0}^i f(X^{\lambda,\mathbf{x}, \mathbf{v}}_{t_0})], \quad t>t_0
\end{equation}
so that a similar procedure to \eqref{eqn:AqN} can be followed. That is, we write
\begin{equation}\label{eqn:AqNX}
    \begin{split}
        |A_i - q^i(\mathbf{x},\mathbf{v})\avgSol_t^i f(\mathbf{x})| &\leq | \mathbb{E}[\avgSol_{t-t_0}^i f(X^{\lambda,\mathbf{x}, \mathbf{v}}_{t_0})] -\avgSol_t^i f(\mathbf{x})  | \\
        &+ | \avgSol_t^i f(\bx)||\mathbb{P}[ V^{\lambda,\mathbf{x}, \mathbf{v}}_{t_0} =\absorbing^{(i)}] - q^i(\bx,\bv)|.
    \end{split}
\end{equation}
Instead of \eqref{eqn:D} we use Assumption \ref{ass:driftSlow} to address the first term of \eqref{eqn:AqNX} to get
\begin{equation}\label{eqn:D1}
\begin{split}
    | \mathbb{E}[\avgSol_{t-t_0}^i f(X^{\lambda,\mathbf{x}, \mathbf{v}}_{t_0})] -\avgSol_t^i f(\mathbf{x})  |&\leq\sup_{\mathbf{x}'\in \bar{B}(\mathbf{x}, \|a\|_{\infty}t_0)}\left|\avgSol_{t-t_0}^i f(\mathbf{x}') -\avgSol_t^i f(\mathbf{x})\right| \\ &\leq  K\|f\|\cdot \frac{1}{\sqrt{\lambda}}.
\end{split}
\end{equation} Instead of \eqref{eqn:fLeqEpsilon} we have that there exists $\tilde{K}>0$ such that \begin{equation}\label{eqn:fLeqEpsilon1}\begin{split}
       | \avgSol_t^i f(\bx)||\mathbb{P}[ V^{\lambda,\mathbf{x}, \mathbf{v}}_{t_0} =\absorbing^{(i)}] - q^i(\bx,\bv)| &\leq  | \avgSol_t^i f(\bx)|| \mu_{t}^{\lambda, \mathbf{x}, \mathbf{v}} - \LawLimitPosDep \|_{\text{TV}} \\
        &\leq  | \avgSol_t^i f(\bx)|(K\rate{t_0\lambda} + Kt_0) \\ &\leq \tilde{K}\|f\|_\infty\frac{1}{\sqrt{\lambda}} ,
\end{split}
\end{equation}
having used Lemma \ref{lemma:fastProcessConvX} and Assumption \ref{ass:driftSlow}. Hence, by \eqref{eqn:D1} and \eqref{eqn:fLeqEpsilon1}, \eqref{eqn:1X} follows. Equation \eqref{eqn:3X} is shown in the same way as \eqref{eqn:3}, using Lemma \ref{lemma:fastProcessConvX}.
This concludes the proof.
\end{proof}

\subsection{Frozen process with multiple ergodic classes}\label{sec:ergClasses}
Now we extend the result of Theorem \ref{thm:averaging1} to the case where, instead of absorbing \emph{states}, we have ergodic \emph{classes}. That is, each $\absorbing^{(i)}$ is now a set of states $E^{(i)} = \{\absorbingErg{(i)}{1},\ldots,\absorbingErg{(i)}{m_i}\}$, within which, the frozen process is ergodic in the sense that there is one unique invariant measure $\mu^{(i)}(\bx; \cdot)$ if we restrict to this set.


\begin{assumption}\label{ass:pErg}
The following hold for the function $\transProbs{}\colon \R^N \times \fastDomain\times\fastDomain \rightarrow[0,1]$
\begin{enumerate}[(i)]
\item \label{ass:pProbErg} Transition probabilities: For all $\mathbf{x}\in \R^N$ and $\mathbf{v} \in \fastDomain$, we have \\$\sum_{\mathbf{v}' \in \fastDomain} \transProbs{\mathbf{x}}(\mathbf{v},\mathbf{v}')=1$.
        \item  The sets $E^{(1)}, \ldots, E^{(L)}$ are ergodic classes: \label{ass:p:AbsErg} For all $1\leq i\leq\numAbs, \mathbf{x}\in \R^N\text{ and }1\leq j\leq m_i$ we have $
     \sum^{m_i}_{j'=1}\transProbs{\mathbf{x}}( \absorbingErg{(i)}{j}, \absorbingErg{(i)}{j'}) = 1$. Furthermore for every $\bv \in E^{(i)}$ there exists $n' \geq 1$, which is independent of $\bv$, such that $$ \mathbb{P}[\fastFrozenDisc_{n'} = \absorbingErg{(i)}{j}] > 0, \quad \text{for every $1\leq j\leq m_i$}.$$
        \item \label{ass:pMinorErg} Probability of absorption: There exist $\tilde{n}\geq1$ and $z_0<1$ such that for all $\bx \in \R^N, \bv \in \fastDomain$ \begin{equation*}
            \mathbb{P}[\fastFrozenDisc_{\tilde{n}} \notin \{E^{(1)} \cup \ldots \cup E^{(L)}\}] \leq z_0 .
        \end{equation*}
        \item \label{ass:pLipErg}  Lipschitzianity in $\bx$: There exists $K_0>0$ such that for all $\mathbf{v} \in \fastDomain$ and $\mathbf{x},\mathbf{x'} \in \R^N$,
    \begin{equation*}
       \sum_{\mathbf{v'} \in \fastDomain} |\transProbs{\mathbf{x}}( \mathbf{v}, \mathbf{v'}) - \transProbs{\mathbf{x'}} (\mathbf{v}, \mathbf{v'}) | \leq K_0\|\mathbf{x}-\mathbf{x'}\|_1
    \end{equation*}
    where $\|\mathbf{w}\|_1 = \sum^n_{i=1}|w_i|$ for any $\mathbf{w} \in \R^{N}$.
    \end{enumerate}
\end{assumption}
\begin{Note} In the case where the ergodic classes are single states, i.e $E^{(i)} = \absorbing^{(i)}$, Assumption \ref{ass:pErg} is equivalent to Assumption \ref{ass:p}. Assumption \ref{ass:pErg} \eqref{ass:p:AbsErg} implies the existence of the unique invariant measure $\mu^{(i)}(\bx; \cdot)$ for each $i=1,\ldots, L$ and $\bx\in\R^N$.
\end{Note}
    
    The proof of this result is very similar to Theorem \ref{thm:averaging1}, but we first introduce some new notation. Instead of \eqref{eqn:limMeasX} we have
\begin{equation*}\label{eqn:limMeasXErg}
    \LawLimitPosDep(\mathbf{v'}) \coloneqq \begin{cases}
    q^{(i)}(\bx,\bv)\cdot \mu^{(i)}(\bx;\bv ') & \text{if } \mathbf{v'} = \absorbingErg{(i)}{j} \text{ for some }1\leq i\leq \numAbs, \, 1\leq j\leq m_i \\
     0 & \text{otherwise,}
    \end{cases}
\end{equation*}
where $q^{(i)}(\mathbf{x},\mathbf{v})$ is the probability that $\fastFrozen_t$ enters $\absorbing^{(i)}$; again, see, for example, \cite[Chapter 3]{norris1998markov} for details on how to calculate $q^{(i)}(\mathbf{x},\mathbf{v})$ using the transition probabilities. Note that for fixed $\bx, \bv$, $\LawLimitPosDep$ is a probability measure since $$\sum_{\bv' \in \fastDomain} \LawLimitPosDep(\mathbf{v'})= \sum_{1\leq i \leq L}\sum_{\bv' \in E^{(i)}} q^{(i)}(\bx,\bv)\cdot \mu^{(i)}(\bx;\bv ')=1.$$ Further, note that the existence of $q^{(i)}(\mathbf{x},\mathbf{v}')$ and the convergence of $\fastFrozenLaw_t$ to $\LawLimitPosDep$ for fixed $t>0$ as $\lambda \rightarrow \infty$ is implied by Assumption \ref{ass:pErg}.
Now we can define the averaged process $\bar{X}^{\mathbf{x}, \mathbf{v}}_t$, namely:
\begin{equation}\label{eqn:avgedProcessXErg}
    d\bar{X}^{\mathbf{x}, \mathbf{v}}_t = \sum_{j=1}^{m_{\zeta^{\mathbf{x},\mathbf{v}}}}\mu^{(\zeta^{\mathbf{x},\mathbf{v}})}(\bx; \absorbingErg{(\zeta^{\mathbf{x},\mathbf{v}})}{j})a(\bar{X}^{\mathbf{x}, \mathbf{v}}_t,\absorbingErg{(\zeta^{\mathbf{x},\mathbf{v}})}{j})  dt, \quad \bar{X}^{\mathbf{x},\mathbf{v}}_0 = \mathbf{x},
\end{equation}
where $\zeta^{\mathbf{x},\mathbf{v}}$ is a random variable, independent of time, with possible values $1, \ldots,L$ such that
    $\zeta^{\mathbf{x},\mathbf{v}} =
   i \text{ with probability } q^{(i)}(\mathbf{x},\mathbf{v})$.

The expectation of $\bar{X}^{\mathbf{x}, \mathbf{v}}_t$ is the same as in \eqref{eqn:avgedSGX} which we write out here for the convenience of the reader
\begin{equation}
    \mathbb{E}\left[f(\bar{X}^{\mathbf{x}, \mathbf{v}}_t)\right] = \sum_{i=1}^\numAbs q^i(\mathbf{x},\mathbf{v})\avgSol_t^i f(\mathbf{x}),
\end{equation}
except this time $\avgSol_t^i f(\mathbf{x})\coloneqq f(y^i(t))$ where $y^i(t)$ is the unique solution of the equation 
$$\frac{d y^i}{d t}=\sum_{j=1}^{m_{i}}\mu^{(i)}(\mathbf{x};\absorbingErg{(i)}{j})a(\bar{X}^{\mathbf{x}, \mathbf{v}}_t,\absorbingErg{(i)}{j}),$$
with initial condition $y^i(0) = \mathbf{x}$.

\begin{theorem}\label{thm:averaging1Erg}
Let Assumption \ref{ass:driftSlow} hold and let 
  $X^{\lambda,\mathbf{x}, \mathbf{v}}_t$ be as in \eqref{eqn:fullycoupled}, where we assume that the fast process $\fastcoupled$ in \eqref{eqn:fullycoupled} is such that Assumption \ref{ass:pErg} holds (that is, in short, we are in the setting in which the frozen process has $L$ absorbing classes). Let $\bar{X}^{\mathbf{x}, \mathbf{v}}_t$ as in \eqref{eqn:avgedProcessXErg}. Then there exists $K>0$, possibly dependent on $t$, such that
		\begin{equation}\label{eqn:conclnonfullycoupledXErg}
			\left| \mathbb{E}[f(X^{\lambda,\mathbf{x}, \mathbf{v}}_t)] - \mathbb{E}\left[f(\bar{X}^{\mathbf{x}, \mathbf{v}}_t)\right] \right| \leq K\|f\| \frac{1}{\sqrt{\lambda}}, \quad \text{ for all $\mathbf{x} \in \R^N$, $\mathbf{v} \in \fastDomain$}.
		\end{equation}
\end{theorem}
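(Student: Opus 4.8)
The plan is to follow the template of the proof of Theorem~\ref{thm:averaging1} at the outer level, the only genuinely new ingredient being an elementary coupled--averaging estimate \emph{inside} each ergodic class: in contrast with the absorbing--states case, once the fast process has entered a class $E^{(i)}$ the slow variable keeps fluctuating, so the simple identity \eqref{eqn:ergRewriteExp} is no longer available.

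First I would set $t_0:=1/\sqrt\lambda$. By Assumption~\ref{ass:pErg} \eqref{ass:p:AbsErg}, for every $\bx$ the rows of $\transProbs{\bx}$ indexed by $E^{(i)}$ are stochastic within $E^{(i)}$, so each $E^{(i)}$ is absorbing for the fully coupled process; hence on $\{\fastProcess{\lambda,\bx,\bv}{t_0}\in E^{(i)}\}$ the fast component stays in $E^{(i)}$ for all $s\ge t_0$. I would then condition $\mathbb E[f(\slowProcess{\lambda,\bx,\bv}{t})]$ on the disjoint events $\{\fastProcess{\lambda,\bx,\bv}{t_0}\in E^{(i)}\}$, $i=1,\dots,\numAbs$, and $\{\fastProcess{\lambda,\bx,\bv}{t_0}\notin E^{(1)}\cup\dots\cup E^{(\numAbs)}\}$, exactly as in \eqref{eqn:conditioning} and its analogue in the proof of Theorem~\ref{thm:averaging1}, and use \eqref{eqn:avgedSGX} (which, as noted before the statement, retains its form in the ergodic--class setting with the new $\avgSol_t^i$). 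This reduces the claim to the two bounds \eqref{eqn:1X} and \eqref{eqn:3X}. To control the probabilities that appear, I would record the analogue of Lemma~\ref{lemma:fastProcessConvX} for the new limit measure $\LawLimitPosDep$: its proof is the one indicated in the paper --- extend Lemma~\ref{lemma41} and Lemma~\ref{lemma:fastProcessConv1} to the block structure produced by Assumption~\ref{ass:pErg}, the only real change being that the ``absorbed'' block is now block--diagonal rather than the identity, so the geometric decay of that block used in the proof of Lemma~\ref{lemma:fastProcessConv1} must be replaced by the uniform total--variation contraction \emph{within} each class supplied by Assumption~\ref{ass:pErg} \eqref{ass:p:AbsErg}; this keeps the constants independent of the number of jumps and hence of $\lambda,t$, and yields $\|\mu_{t_0}^{\lambda,\bx,\bv}-\LawLimitPosDep\|_{\text{TV}}\le Ke^{-c_1 t_0\lambda}+Kt_0\le K/\sqrt\lambda$. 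Since $\LawLimitPosDep(E^{(i)})=q^{(i)}(\bx,\bv)$ and $\LawLimitPosDep$ charges no transient state, this gives $|\mathbb P[\fastProcess{\lambda,\bx,\bv}{t_0}\in E^{(i)}]-q^{(i)}(\bx,\bv)|\le K/\sqrt\lambda$ and $\mathbb P[\fastProcess{\lambda,\bx,\bv}{t_0}\notin E^{(1)}\cup\dots\cup E^{(\numAbs)}]\le K/\sqrt\lambda$; the first disposes of the second term of the analogue of \eqref{eqn:AqNX}, the second gives \eqref{eqn:3X}.

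The substantive step is the analogue of the first term in \eqref{eqn:AqNX}: for each $i$,
\[
\Bigl|\mathbb E\bigl[f(\slowProcess{\lambda,\bx,\bv}{t})\mid \fastProcess{\lambda,\bx,\bv}{t_0}\in E^{(i)}\bigr]-\avgSol_t^i f(\bx)\Bigr|\le K\|f\|/\sqrt\lambda .
\]
Conditionally on $\{\fastProcess{\lambda,\bx,\bv}{t_0}\in E^{(i)}\}$ and on $\slowProcess{\lambda,\bx,\bv}{t_0}$ (which lies in $\bar B(\bx,\|a\|_\infty t_0)$), the pair $(\slowProcess{\lambda,\cdot,\cdot}{s},\fastProcess{\lambda,\cdot,\cdot}{s})_{s\ge t_0}$ is a coupled slow--fast system whose fast part lives on $E^{(i)}$, where the frozen process is uniquely ergodic with invariant measure $\mu^{(i)}(\cdot;\cdot)$ --- the classical averaging situation, which I want to treat without abstract input. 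I would do this in two moves. (a) Rerun the frozen/sequence--process machinery of Section~\ref{sec:absStates} with state space $E^{(i)}$ to obtain, uniformly in the frozen slow position, that the fast marginal equilibrates to $\mu^{(i)}(\,\cdot\,;\cdot)$ at rate $Ke^{-c_1 s\lambda}+Ks$ (the $E^{(i)}$--analogue of Lemma~\ref{lemma:fastProcessConvX}; uniformity in the slow position is what the $\bx$--uniform formulations in Assumption~\ref{ass:pErg} \eqref{ass:p:AbsErg}--\eqref{ass:pMinorErg} are for). (b) Transfer this to the slow component by a time discretisation of $[t_0,t]$ into blocks of length $\delta\sim1/\sqrt\lambda$: on each block freeze the slow variable (drift error $O(\delta)$ per unit time, by Assumption~\ref{ass:driftSlow}, using also Lipschitzianity of $\bx\mapsto\mu^{(i)}(\bx;\cdot)$), replace the fast drift by its $\mu^{(i)}$--average (error $\int_0^\delta(Ke^{-c_1 s\lambda}+Ks)\,ds\le K/\lambda+K\delta^2$ per block), and close with a Gr\"onwall comparison to $\avgSol^i$; summing the $O(t/\delta)$ blocks gives total error $O(t/\sqrt\lambda)$, which identifies $\mathbb E[f(\slowProcess{\lambda,\bx,\bv}{t})\mid\fastProcess{\lambda,\bx,\bv}{t_0}\in E^{(i)}]$ with $\mathbb E[\avgSol_{t-t_0}^i f(\slowProcess{\lambda,\bx,\bv}{t_0})\mid\cdots]$ up to $O(\|f\|/\sqrt\lambda)$. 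Finally, since $\slowProcess{\lambda,\bx,\bv}{t_0}$ is within $\|a\|_\infty t_0$ of $\bx$, Lipschitzianity of the averaged flow --- which for the averaged drift $\bar a^{(i)}$ of \eqref{eqn:avgedDriftErgClass} follows from Assumption~\ref{ass:driftSlow} together with the Lipschitz dependence of $\mu^{(i)}(\cdot;\cdot)$ on the slow variable (obtained from the explicit rational formula for the stationary law of a finite, uniformly irreducible chain and Assumption~\ref{ass:pErg} \eqref{ass:pLipErg}) --- lets us replace $\avgSol_{t-t_0}^i f(\slowProcess{\lambda,\bx,\bv}{t_0})$ by $\avgSol_t^i f(\bx)$ up to $O(\|f\|/\sqrt\lambda)$, exactly as in \eqref{eqn:D1}. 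Assembling and summing over the finitely many classes yields \eqref{eqn:conclnonfullycoupledXErg}.

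The main obstacle is step (b): in Theorem~\ref{thm:averaging1}, conditioning on $\{\fastProcess{\lambda,\bx,\bv}{t_0}=\absorbing^{(i)}\}$ makes the slow trajectory deterministic for $t\ge t_0$ and \eqref{eqn:ergRewriteExp} is immediate, whereas here one must carry out a genuine (if elementary) averaging argument on $[t_0,t]$ while keeping all constants uniform in the wandering slow position --- and, as noted above, the extension of Lemma~\ref{lemma:fastProcessConv1} needed for step (a) is slightly more than mechanical, since the relevant block no longer decays and one must invoke within--class ergodicity. The regularity of $\bx\mapsto\mu^{(i)}(\bx;\cdot)$ is a minor auxiliary point. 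If one is content to step outside the ``hands-on'' philosophy of the paper, step (b) can instead be quoted from the classical coupled--averaging literature, e.g.\ \cite{pavliotis2008multiscale}.
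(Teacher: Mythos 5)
Your proposal is correct and follows essentially the same route as the paper: the same conditioning at $t_0$ on the events $\{V^{\lambda,\bx,\bv}_{t_0}\in E^{(i)}\}$ and their complement, the same reduction to analogues of \eqref{eqn:1X} and \eqref{eqn:3X} via the ergodic-class version of Lemma \ref{lemma:fastProcessConvX}, and the same Lipschitz shift from $\avgSol^i_{t-t_0}f(X^{\lambda,\bx,\bv}_{t_0})$ to $\avgSol^i_t f(\bx)$ as in \eqref{eqn:D1}. The only difference is one of self-containment: where the paper disposes of the post-absorption, within-class averaging step by citing classical averaging results (e.g.\ \cite[Chapter 16]{pavliotis2008multiscale}), you sketch that step by hand (block discretisation plus Gr\"onwall, with the extension of Lemma \ref{lemma:fastProcessConv1} to the block-diagonal structure made explicit), while correctly noting it could equally be quoted from the literature.
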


\section*{Acknowledgments}
 KJP is a member of INdAM-GNFM and acknowledges "Miur-Dipartimento di Eccellenza" funding to the Dipartimento di Scienze, Progetto e Politiche del Territorio (DIST). M.O. acknowledges support from the Leverhulme grant  RPG-2020-09. I.S was supported by the EPSRC Centre for Doctoral Training in Mathematical 
Modelling, Analysis and Computation (MAC-MIGS) funded by the UK Engineering and 
Physical Sciences Research Council (grant EP/S023291/1), Heriot-Watt University and the 
University of Edinburgh. We thank the anonymous referees for thoughtful comments, which helped improve the paper. 

\begin{appendix}
\section{Auxiliary proofs}\label{sec:AuxProofs}
\begin{proof}[Proof of Lemma \ref{lemma:fastProcessConv}]
We begin by proving \eqref{eqn:convAbs0}. In the following, we denote by $\{V^{\mathbf{v}}(k)\}^\infty_{k=0}$ the discrete time version of the Markov chain $V^{\lambda, \mathbf{v}}_t$.
That is, $\{V^{\mathbf{v}}(k)\}^\infty_{k=0}$ is the process that changes value at every time-unit rather than when the Poisson clock goes off according to the transition probabilities $\transProbs{}(\mathbf{v}, \mathbf{v}')$ of going from state $\mathbf{v} \in \{-1,+1\}^N$ to $\mathbf{v}' \in \{-1,+1\}^N$ (construction of this discrete time-process is analogous to \eqref{eqn:condPoissonProcess}).

There is at least one way of reaching $\mathbf{e}$ or $-\mathbf{e}$ after $N$ steps of the discrete time Markov chain $V^{\mathbf{v}}(k)$ and hence the probability that the chain has not been absorbed after $N$ steps is strictly less than $1$ (i.e $\mathbb{P}\left[V^{\mathbf{v}}(N) \notin \{\absorbing,-\absorbing\} \right] \leq b_\mathbf{v}<1$). Since $\{-1,+1\}^N$ is finite, there exists $b<1$ independent of $\mathbf{v}$ (but dependent on $N$) such that
\begin{equation*}
   \mathbb{P}\left[V^{\mathbf{v}}(N) \notin \{\absorbing,-\absorbing\} \right] \leq b.
\end{equation*}
Hence, for all $k$,
\begin{equation}\label{eqn:fastDecay}
   \mathbb{P}\left[V^{\mathbf{v}}(k) \notin \{\absorbing,-\absorbing\} \right]  \leq \left( \mathbb{P}\left[V^{\mathbf{v}}(N) \notin \{\absorbing,-\absorbing\} \right]\right)^{\lfloor\frac{k}{N}\rfloor} \leq b^{\lfloor\frac{k}{N}\rfloor}.
\end{equation}
From here, we assume that $b>0$, otherwise we are done.
Using \eqref{eqn:fastDecay} and the distribution of the Poisson process $\tau_t^{N\lambda}$
\begin{equation}\label{eqn:psnEst}
    \begin{split}
        \mathbb{P}[V^{\lambda, \mathbf{v}}_t \notin \{\absorbing,-\absorbing\}] &= \sum_{k=0}^{\infty} \mathbb{P} \left[\tau_t^{N\lambda} = k\right] \mathbb{P}\left[V^{\mathbf{v}}(k) \notin \{\absorbing,-\absorbing\} \right] \\
        &\leq \sum_{k=0}^{\infty} \mathbb{P} \left[\tau_t^{N\lambda} = k\right]b^{\lfloor\frac{k}{N}\rfloor} \leq \sum_{k=0}^{\infty} \frac{(N\lambda t)^k e^{-N\lambda t}}{k!} b^{\lfloor\frac{k}{N}\rfloor}.\end{split}
        \end{equation}
        Then, since $\lfloor\frac{k}{N} \rfloor \geq \frac{k}{N}- \frac{N-1}{N} $ we can write
        \begin{equation} \label{eqn:sumEst}
            \begin{split}
       \sum_{k=0}^{\infty} \frac{(N\lambda t)^k e^{-N\lambda t}}{k!} b^{\lfloor\frac{k}{N}\rfloor} &\leq b^{-\frac{N-1}{N}} e^{-N\lambda t}\sum_{k=0}^{\infty} \frac{(N\lambda t b^{1/N})^k }{k!} \\
        &\leq b^{-\frac{N-1}{N}} e^{-N\lambda t(1-b^{1/N})}.
    \end{split}
\end{equation}
Then \eqref{eqn:psnEst} and \eqref{eqn:sumEst} prove \eqref{eqn:convAbs0}.

We use \eqref{eqn:convAbs0} to show that \eqref{eqn:convAbs1} and \eqref{eqn:convAbs2} must hold. We first use \eqref{eqn:convAbs0} to write
\begin{equation*}
    1- b^{-\frac{N-1}{N}} e^{-N\lambda t(1-b^{1/N})} \leq \mathbb{P}[V^{\lambda, \mathbf{v}}_t = -\absorbing] - q(\mathbf{v}) + \mathbb{P}[V^{\lambda, \mathbf{v}}_t = \absorbing] -(1- q(\mathbf{v})) +1  \leq 1.
\end{equation*}
Hence,
\begin{equation*}
    0 \leq  q(\mathbf{v}) - \mathbb{P}[V^{\lambda, \mathbf{v}}_t = -\absorbing] + (1- q(\mathbf{v})) -\mathbb{P}[V^{\lambda, \mathbf{v}}_t = \absorbing] \leq K e^{-N\lambda t(1-b^{1/N})}.
\end{equation*}
Now we note that $\mathbb{P}[V^{\lambda, \mathbf{v}}_t = \absorbing]$ and $\mathbb{P}[V^{\lambda, \mathbf{v}}_t = -\absorbing]$ are increasing functions of time. This can be seen by observing that, as you increase the number of available steps to get to $\mathbf{e}$ or $-\mathbf{e}$, the number of ways to get to  $\mathbf{e}$ or $-\mathbf{e}$ increases as well (though not strictly). Hence, \eqref{eqn:convAbs1} and \eqref{eqn:convAbs2} are shown.

To complete the proof, \eqref{eqn:convMeas} follows from \eqref{eqn:convAbs0}-\eqref{eqn:convAbs2} since for all $A$ in the power set of $\{-1,+1\}^N$ we have
\begin{equation*}
\begin{split}
       |\mu_{t}^{\lambda, \mathbf{v}}(A) - \mu^{\mathbf{v}}_{\infty}(A)| &\leq  |\mu_{t}^{\lambda, \mathbf{v}}(A\setminus \{\absorbing,-\absorbing\})|
+|\mu_{t}^{\lambda, \mathbf{v}}(\{-\absorbing\}) - q(\mathbf{v})| \\&+
|\mu_{t}^{\lambda, \mathbf{v}}(\{\absorbing\}) - (1-q(\mathbf{v}))| \\
&\leq 3K_1 e^{-c_1t\lambda}.
\end{split}
\end{equation*}
\end{proof}

\begin{proof}[Proof of Theorem \ref{thm:averaging1Erg}]
    The proof of Theorem  is very similar to that of Theorem \ref{thm:averaging1},   so we just sketch it and point out the differences. We cannot write \eqref{eqn:ergRewriteExp}, since the process is exactly the averaged process once it is absorbed into an ergodic class. However, we can use classical averaging results to work around this by writing
    \begin{equation*}
    \begin{split}
         &\left|\mathbb{E}\big[f\big(X^{\lambda,\mathbf{x}, \mathbf{v}}_t \big) | V^{\lambda, \mathbf{x},\mathbf{v}}_{t_0} \in E^{(i)}\big] - \avgSol_t^i f(\mathbf{x})\right| \leq \\
         &\left|\mathbb{E}\big[\mathbb{E}\big[f\big(X^{\lambda,\mathbf{x}, \mathbf{v}}_{t} \big) -\avgSol_{t-t_0}^i f(X^{\lambda,\mathbf{x}, \mathbf{v}}_{t_0})| V^{\lambda,\mathbf{x}, \mathbf{v}}_{t_0} \in E^{(i)} ,X^{\lambda,\mathbf{x}, \mathbf{v}}_{t_0}\big]| V^{\lambda,\mathbf{x}, \mathbf{v}}_{t_0}\in E^{(i)} \big]\right|
         \\
         &+\sup_{\mathbf{x}'\in \bar{B}(\mathbf{x}, \|a\|_{\infty}t_0)}\left|\avgSol_{t-t_0}^i f(\mathbf{x}') -\avgSol_t^i f(\mathbf{x})\right|,
    \end{split}
    \end{equation*}
    where $t_0 = \frac{1}{\lambda}$.
 The first addend can be bounded via classical averaging techniques,  see e.g \cite[Chapter $16$]{pavliotis2008multiscale}, using which it is easy to show that the first addend tends to zero as $\lambda \rightarrow \infty$.
 
 The second addend can be bounded using the Lipschitzianity $f$ and Assumption \ref{ass:driftSlow}. That is, we have \begin{equation*}
     \sup_{\mathbf{x}'\in \bar{B}(\mathbf{x}, \|a\|_{\infty}t_0)}\left|\avgSol_{t-t_0}^i f(\mathbf{x}') -\avgSol_t^i f(\mathbf{x})\right| \leq K \|f \|\frac{1}{\sqrt{\lambda}}.
 \end{equation*}
Hence a bound analogous to \eqref{eqn:1X} follows again by Lemma \ref{lemma:fastProcessConvX}. A bound analogous to \eqref{eqn:3X} can be proved in exactly the same way as in Theorem \ref{thm:averaging1}. Hence \eqref{eqn:conclnonfullycoupledXErg} is shown.
\end{proof}
\end{appendix}

\bibliographystyle{ieeetr}
\bibliography{arxivresubmission}

\end{document}